\newdefinition{rem}{Remark}[section]
\newdefinition{theorem}{Theorem}[section]
\newdefinition{corollary}{Corollary}[section]
\newdefinition{definition}{Definition}[section]
\newdefinition{lemma}{Lemma}[section]
\newdefinition{prop}{Proposition}[section]
\numberwithin{equation}{section}
\begin{document}
\begin{frontmatter}
\title{Multifractal analysis of the growth rate of digits in Schneider's $p$-adic continued fraction dynamical system}
\author[a]{Kunkun Song}\ead{songkunkun@hunnu.edu.cn}
\author[b]{Wanlou Wu\corref{cor1}}\ead{wuwanlou@163.com}
\author[c]{Yueli Yu}\ead{yuyueli@whu.edu.cn}
\author[a]{Sainan Zeng}\ead{17376552448@163.com}
\address[a]{Key Laboratory of Computing and Stochastic Mathematics (Ministry of Education), School of Mathematics and Statistics, Hunan Normal University, Changsha, 410081, China}
\address[b]{School of Mathematics and Statistics, Jiangsu Normal University, Xuzhou, 221116, China}
\address[c]{School of Mathematics and Statistics, Wuhan University, Wuhan, 430072, China}
\cortext[cor1]{Corresponding author.} 
\begin{abstract}\par
   Let $\mathbb{Z}_p$ be the ring of $p$-adic integers and $a_n(x)$ be the $n$-th digit of Schneider's $p$-adic continued fraction of $x\in p\mathbb{Z}_p$. We study the growth rate of the digits $\{a_n(x)\}_{n\geq1}$ from the viewpoint of multifractal analysis. The Hausdorff dimension of the set \[E_{\sup}(\psi)=\Big\{x\in p\mathbb{Z}_p:\ \limsup\limits_{n\to\infty}\frac{a_n(x)}{\psi(n)}=1\Big\}\] is completely determined for any $\psi:\mathbb{N}\to\mathbb{R}^{+}$ satisfying $\psi(n)\to \infty$ as $n\to\infty$. As an application, we also calculate the Hausdorff dimension of the intersection sets \[E^{\sup}_{\inf}(\psi,\alpha_1,\alpha_2)=\left\{x\in p\mathbb{Z}_p:\liminf_{n\rightarrow\infty}\dfrac{a_n(x)}{\psi(n)}=\alpha_1,~\limsup_{n\rightarrow\infty}\dfrac{a_n(x)}{\psi(n)}=\alpha_2\right\}\] for the above function $\psi$ and $0\leq\alpha_1<\alpha_2\leq\infty$.
\end{abstract}
\begin{keyword}
   $p$-adic theory\sep $p$-adic continued fraction\sep Hausdorff dimension
\MSC[2010] 11F85\sep 11K55\sep 28A80
\end{keyword}
\end{frontmatter}

\section{Introduction}
   It is well known that regular continued fraction expansion can be induced by the Gauss map $T_G : [0,1)\rightarrow[0,1)$ defined as
\begin{equation*}
   T_G(0):=0,\ \ T_G(x):=1/x\ (\text{mod }1),\ \text{for}\ x\in(0,1).
\end{equation*}
   Each irrational number $x \in [0,1)$ admits a unique \emph{regular continued fraction expansion} of the form
\begin{equation}\label{cfe}
   x=\dfrac{1}{a_1(x) +\dfrac{1}{a_2(x)+\ddots+\dfrac{1}{a_n(x)+\ddots}}}=[a_1(x), a_2(x),\ldots, a_n(x),\ldots],
\end{equation}
   where $a_1(x)=\lfloor\frac{1}{x}\rfloor$ and $a_n(x)=a_1(T_G^{n-1}(x))\ (n\geq2)$ are called the \emph{partial quotients} of the regular continued fraction expansion of $x$.
   
   One of the major subjects in the study of regular continued fraction is concerned with the metric properties of the partial quotients. The Borel-Bernstein Theorem (see \cite{BF1912,BE1912}) asserts that for Lebesgue almost all $x \in [0,1)$, $a_n(x) \geq \psi(n)$ holds for infinitely many $n$'s or finitely many $n$'s depending on whether $\sum_{n \geq 1} 1/\psi(n)$ diverges or converges, which indicates that many sets of regular continued fraction for which their partial quotients obeying some restrictions are of null Lebesgue measure. Then it is natural to investigate the sizes of such null sets in the sense of Hausdorff dimension. The first published work was due to Jarn{\'i}k \cite{Jar28} who proved that the set of regular continued fraction whose partial quotients are bounded has full Hausdorff dimension. On the other hand, one can consider the sets of regular continued fraction whose partial quotients become large in some sense. Two typical sets are mainly concerned with the uniform and asymptotic behavior of the partial quotients, i.e., $$\big\{x\in[0,1):a_n(x)\geq\psi(n)\ \text{for all}\ n\in\mathbb{N}\big\},$$ $$\big\{x\in[0,1):a_n(x)\geq\psi(n)\ \text{for infinitely many}\ n\in\mathbb{N}\big\}.$$ The study of these two sets goes back to Good \cite{Good41} who proved that the set $\big\{x\in [0,1):a_n(x)\to\infty\ \text{as}\ n\to\infty\big\}$ is of Hausdorff dimension $1/2$. After that, many authors focus on the growth rate of partial quotients from various aspects in regular continued fraction. To list a few of them, we would like to mention the works of Hirst \cite{lesHir73}, {\L}uczak \cite{lesLuc97}, Feng, Wu, Liang and Tseng \cite{FWLT97}, Xu \cite{Xu08}, Liao and Rams \cite{LR161}.
   
   In particular, Wang and Wu \cite{WW08} completely determined the Hausdorff dimension of the set $\big\{x\in[0,1):a_n(x)\geq\psi(n)\ \text{for infinitely many}\ n\in\mathbb{N}\big\}$ for any $\psi:\mathbb{N}\to\mathbb{R}^{+}$ in regular continued fraction. Based on the main results in Wang and Wu \cite{WW08}, Fang, Ma and Song \cite{FMS21} recently obtained the Hausdorff dimensions of some exceptional sets of Borel-Bernstein theorem. Also, questions in the same flavor for continued fractions of Laurent series have been studied by  Hu, Wang, Wu and Yu \cite{HWWY08}, Hu, Hussain and Yu \cite{HHY21}. Motivated by the characterizations related to these two sets in regular continued fraction and continued fractions over the field of formal Laurent series, we will pay attention to the metric results in the setting of  Schneider's $p$-adic continued fraction.
   
   In the following we shall first introduce some basic facts about the field $\mathbb{Q}_p$ of $p$-adic numbers. Let $p$ be a prime number and $\mathbb{Q}$ be the field of rational numbers. It is well known that each non-zero rational number $r$ can be written as \[r=p^{v_p(r)}\cdot\frac{m}{n}\ \ \text{with}\ v_p(r), m, n\in\mathbb{Z}\ \text{and}\ (p,mn)=1,\] where $v_p(r)$ is called the $p$-adic valuation of $r$ and $(m,n)$ denotes the greatest common divisor of the two integers $m$ and $n$. We define the $p$-adic absolute value $|\cdot|_p$ on $\mathbb{Q}$ as \[|0|_p=0\ \text{and}\ |r|_p=p^{-v_p(r)}\ \text{for}\ r\neq0.\] Then $|\cdot|_p$ is a non-Archimedean absolute value (see \cite{Kob84}). The field $\mathbb{Q}_p$ of $p$-adic numbers is defined as the completion of $\mathbb{Q}$ with respect to $|\cdot|_p$, i.e., the smallest field containing $\mathbb{Q}$ in which all the Cauchy sequences are convergent. Thus, $\mathbb{Q}_p$ is obtained from $|\cdot|_p$ in the same way as the field $\mathbb{R}$ of real numbers is obtained from the Euclidean absolute value: as the completion of $\mathbb{Q}$. A typical element $x\in\mathbb{Q}_p$ is of the form \[x=\sum\limits_{i\geq v_p(x)}c_ip^{i}\ \text{with}\ c_i\in\mathbb{Z}/p\mathbb{Z}\ \text{ for all $i\geq v_p(x)$}\ \text{and}\ c_{v_p(x)}\neq0.\] The ring $\mathbb{Z}_p$ of $p$-adic integers is the compact subset of all $p$-adic numbers in the field $\mathbb{Q}_p$ with non-negative valuation. That is, \[\mathbb{Z}_p=\Big\{x\in\mathbb{Q}_p:\ |x|_p\leq1\Big\}=\left\{\sum_{i\geq0}c_ip^i:\ c_i\in\mathbb{Z}/p\mathbb{Z}\text{ for all $i\geq 0$}\right\}.\] Note that the set $x+\mathbb{Z}_p$ is a compact neighborhood of each $x\in\mathbb{Q}_p$, which implies that $\mathbb{Q}_p$ is a locally compact commutative group with respect to addition. Therefore, there exists a translation invariant Haar measure (see \cite{VVZ}) denoted by $\mu_p$ on $\mathbb{Q}_p$. We normalize the Haar measure $\mu_p$ by the equality $\mu_p(p\mathbb{Z}_p)=1$, where $p\mathbb{Z}_p$ is the unique maximal ideal of ring $\mathbb{Z}_p$.
   
   The Schneider's $p$-adic continued fractions map $T_p:\ p\mathbb{Z}_p\rightarrow p\mathbb{Z}_p$ is defined as
\begin{equation}\label{tp}
T_p(0)=0\ \text{and}\ T_p(x)=\frac{p^{a_1(x)}}{x}-b_1(x)\ \text{for}\ x\in p\mathbb{Z}_p\setminus\{0\},
\end{equation}
   where $a_1(x)=v_p(x)$ and $b_1(x)\in\{1,2,\ldots,p-1\}$ such that $p^{a_1(x)}/x=b_1(x)\ (\text{mod }p)$. Let
\begin{equation}\label{tan}
a_n=a_n(x)=a_1(T^{n-1}_p(x))\ \text{and}\ b_n=b_n(x)=b_1(T^{n-1}_p(x))\ \ \text{for all}\ n\geq2.
\end{equation}
   Then it follows from \eqref{tp} and \eqref{tan} that Schneider's $p$-adic continued fraction expansion (see \cite{S70,Van93}) of every element $x\in p\mathbb{Z}_p$ is of the form
\begin{align}\label{padic}
 \nonumber x&=\dfrac{p^{a_1(x)}}{b_1(x)+\dfrac{p^{a_2(x)}}{b_2(x)+\ddots+\dfrac{p^{a_n(x)}}{b_n(x)+T^{n}_p(x)}}}\\
  &:=[a_1(x),b_1(x);a_2(x),b_2(x);\ldots;a_n(x),b_n(x)+T^{n}_p(x)].
\end{align}
   Clearly, a finite Schneider's $p$-adic continued fraction expansion always represents a rational number, while the converse is not true. Bundschuh \cite{Bu} proved that if $x\in\mathbb{Q}$ has an infinite Schneider's $p$-adic continued fraction expansion, then this expansion has the same periodic tail, i.e., $a_n(x)=1,b_n(x)=p-1$ for every sufficiently large $n$. Hirst and Washington \cite{HW11} gave a combinatorial characterization of some non-terminating expansions of rational numbers. Pejkovi\'{c} \cite{P23} recently proved a criterion for determining whether the expansions of rational numbers terminate. For representations of quadratic irrationals, we would like to mention the theorem of Lagrange on regular continued fraction: let $x$ be an irrational positive real number, then its regular continued fraction expansion is eventually periodic if and only if $x$ is a quadratic irrational. However, Bundschuh \cite{Bu} showed that Lagrange's theorem on Schneider's $p$-adic continued fraction fails by some numerical computations. Later on, de Weger \cite{We88} gave a criterion for the non-periodicity of Schneider's $p$-adic continued fraction. Tilborghs \cite{Ti90} determined an algorithm to detect the non-periodicity of Schneider's $p$-adic continued fraction in finite steps. In other directions, Hirst and Washington \cite{HW11} proved that Schneider's $p$-adic continued fraction dynamical system $(p\mathbb{Z}_p,T_p,\mu_p)$ is ergodic. Bugeaud and Pejkovi\'{c} \cite{BP15} showed that Schneider's $p$-adic continued fractions are powerful to construct $p$-adic numbers with prescribed Diophantine properties. Hančl, Jaššová, Lertchoosakul and Nair \cite{HJLN13} obtained some results on the subsequence ergodic theory of the map $T_p$. Very recently, Haddley and Nair \cite{HN22} calculated the entropy of the dynamical system $(p\mathbb{Z}_p,T_p,\mu_p)$ and showed that it has a natural extension which is Bernoulli. For more details and background about $p$-adic numbers and Schneider's $p$-adic continued fraction, we refer to \cite{DKK,Kob84,R23,S70,Van93} and references therein.
   
   It is worth noting that Hu, Yu and Zhao \cite{HYZ18} proved that the digits $\{a_n(x)\}_{n\geq1}$ in the dynamical system $(p\mathbb{Z}_p,T_p,\mu_p)$ are independent and identically distributed with respect to $\mu_p$. Applying Borel-Cantelli Lemma, they also showed that for $\mu_p$-almost all $x \in p\mathbb{Z}_p$, $a_n(x) \geq \psi(n)$ holds for infinitely many $n$'s or finitely many $n$'s depending on whether $\sum_{n \geq 1} p^{-\psi(n)}$ diverges or converges. Thus, it is immediate that for $\mu_p$-almost all $x \in p\mathbb{Z}_p$,
\begin{equation}\label{limsup}
\limsup\limits_{n\to\infty}\frac{a_n(x)}{\log n}=\frac{1}{\log p}.
\end{equation}
   The purpose of this paper is to study the growth rate of the digits $\{a_n(x)\}_{n\geq1}$ in Schneider's $p$-adic continued fraction from the viewpoint of multifractal analysis. We first consider some exceptional sets of \eqref{limsup}, denoted by \[E_{\sup}(\psi)=\left\{x\in p\mathbb{Z}_p:\ \limsup\limits_{n\to\infty}\frac{a_n(x)}{\psi(n)}=1\right\},\] where $\psi:\mathbb{N}\rightarrow\mathbb{R}^+$ is a function satisfying $\psi(n)\to\infty$ as $n\to\infty$. In the sequel, we use the notation $\dim_{\rm H}$ to denote the Hausdorff dimension. Now we are in a position to state our main results.
\begin{theorem}\label{zdl}
   Let $\psi:\mathbb{N}\rightarrow\mathbb{R}^+$ be a function satisfying $\psi(n)\to\infty$ as $n\to\infty$. We have
\begin{enumerate}[(i)]
\item if $\psi(n)/n\to0$ as $n\to\infty$, then $\dim_{\rm H}E_{\sup}(\psi)=1$,
\item if $\psi(n)/n\to\alpha\ (0<\alpha<\infty)$ as $n\to\infty$, then $\dim_{\rm H}E_{\sup}(\psi)=s(\alpha)$,
where $s(\alpha)$ is the unique real solution of the equation
\begin{equation}\label{fc}
\sum\limits_{n\geq1}(p-1)p^{-(n+\alpha)s}=1,
\end{equation}
\item if $\psi(n)/n\to\infty$ as $n\to\infty$, then $\dim_{\rm H}E_{\sup}(\psi)=0$.
\end{enumerate}
\end{theorem}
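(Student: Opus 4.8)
The plan is to treat all three cases through one mechanism: reduce every Hausdorff-dimension estimate to counting Schneider cylinders and summing powers of their diameters, exploiting the following geometry. For an admissible word $\omega=(a_1,b_1;\dots;a_n,b_n)$ with $a_i\ge 1$ and $1\le b_i\le p-1$, write $I(\omega)=\{x\in p\mathbb{Z}_p:a_i(x)=a_i,\,b_i(x)=b_i\text{ for }1\le i\le n\}$. From \eqref{tp}--\eqref{padic} one reads off that each $I(\omega)$ is a ball with $\mu_p(I(\omega))=p^{-(a_1+\cdots+a_n)}$ and $\operatorname{diam}I(\omega)=p^{-(a_1+\cdots+a_n)-1}$, and that $T_p^{\,n}$ restricted to $I(\omega)$ is a similarity onto $p\mathbb{Z}_p$ of ratio $p^{a_1+\cdots+a_n}$. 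In particular, for each integer $m\ge 1$ the set $\{x\in I(\omega):a_{n+1}(x)\ge m\}$ is the $T_p^{\,n}$-preimage of $p^m\mathbb{Z}_p$, hence a ball of diameter $p^{-(a_1+\cdots+a_n)-m}$; and every ball of $p\mathbb{Z}_p$ has this shape. Throughout I abbreviate $f(s)=(p-1)\sum_{k\ge1}p^{-ks}=(p-1)p^{-s}/(1-p^{-s})$; since $f$ is continuous and strictly decreasing with $f(1)=1$, the map $s\mapsto p^{-\beta s}f(s)$ is strictly decreasing and equals $1$ exactly at $s=s(\beta)$, the solution of \eqref{fc} with $\alpha$ replaced by $\beta\ge0$ (so $s(0)=1$), and $\beta\mapsto s(\beta)$ is continuous and non-increasing.

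For the \emph{upper bounds}: $\limsup_n a_n(x)/\psi(n)=1$ forces $a_n(x)\ge(1-\varepsilon)\psi(n)$ infinitely often, so $E_{\sup}(\psi)\subseteq\bigcap_{N}\bigcup_{n\ge N}\{x:a_n(x)\ge(1-\varepsilon)\psi(n)\}$ for every $\varepsilon\in(0,1)$. For fixed $N$, the balls $\{x\in I(a_1,b_1;\dots;a_{n-1},b_{n-1}):a_n(x)\ge\lceil(1-\varepsilon)\psi(n)\rceil\}$ (for $n\ge N$ and admissible $(a_i,b_i)_{i<n}$) cover this set, and their $s$-dimensional total is $\sum_{n\ge N}p^{-\lceil(1-\varepsilon)\psi(n)\rceil s}f(s)^{n-1}$. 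In case (ii) this is $\asymp\sum_n(p^{-(1-\varepsilon)\alpha s}f(s))^n$, which tends to $0$ as $N\to\infty$ whenever $s>s((1-\varepsilon)\alpha)$, so $\dim_{\rm H}E_{\sup}(\psi)\le s((1-\varepsilon)\alpha)$; letting $\varepsilon\to0$ and using continuity of $s(\cdot)$ gives $\le s(\alpha)$. In case (iii) the exponent $\lceil(1-\varepsilon)\psi(n)\rceil/n\to\infty$, so the series converges for every $s>0$ and $\dim_{\rm H}E_{\sup}(\psi)=0$. In case (i) the bound $\dim_{\rm H}E_{\sup}(\psi)\le\dim_{\rm H}(p\mathbb{Z}_p)=1$ is trivial.

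The \emph{lower bounds} are the core, and the plan is to follow the Cantor-set plus mass-distribution scheme of Wang and Wu \cite{WW08}. Fix $\eta>0$. In case (ii), choose $N$ so large that the unique solution $s_N$ of $(p-1)\sum_{k=1}^{N}p^{-(k+\alpha)s}=1$ obeys $s_N>s(\alpha)-\eta$ (possible since $s_N\uparrow s(\alpha)$); in case (i) use $(p-1)\sum_{k=1}^N p^{-ks}=1$, whose solutions tend up to $1$. Pick a very rapidly increasing sequence $n_1<n_2<\cdots$ and put $F=\{x:a_{n_k}(x)=\lceil\psi(n_k)\rceil\text{ for all }k,\ a_n(x)\in\{1,\dots,N\}\text{ for }n\notin\{n_k\}_{k\ge1}\}$. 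Since $\lceil\psi(n_k)\rceil/\psi(n_k)\to1$ while $a_n(x)/\psi(n)\le N/\psi(n)\to0$ off the $n_k$'s, we get $F\subseteq E_{\sup}(\psi)$. Equip $F$ with the measure $\nu$ making the free digits independent with $\nu(a_n=k,\,b_n=j)\propto p^{-s_N(k+\alpha)}$ (weights $\propto p^{-s_N k}$ in case (i)), the mass at each level $n_k$ being spread evenly over the $p-1$ values of $b_{n_k}$. Using the ball description, $\log\nu(B(x,r))/\log r$ is at least $s_N$ away from the levels $n_k$, and the shift by $\alpha$ in the exponent is engineered so that the extra cost $p^{-s_N\lceil\psi(n_k)\rceil}\approx p^{-s_N\alpha n_k}$ of the forced digit at $n_k$ is exactly paid off by the $\approx n_k$ factors $p^{-s_N\alpha}$ carried by the free digits preceding it. With $(n_k)$ sparse enough that earlier forced digits contribute negligibly, one obtains $\liminf_{r\to0}\log\nu(B(x,r))/\log r\ge s_N$ for $\nu$-a.e.\ $x$, so by the mass distribution principle $\dim_{\rm H}E_{\sup}(\psi)\ge\dim_{\rm H}F\ge s_N$; letting $\eta\to0$ settles (i) and (ii), while (iii) needs no lower bound.

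The main obstacle is precisely this local-dimension estimate for $\nu$ at the scales $r$ \emph{inside} a single forced digit $a_{n_k}=\lceil\psi(n_k)\rceil\approx\alpha n_k$, on which $\nu(B(x,r))$ stays constant while $r$ shrinks by the factor $p^{\alpha n_k}$: it is the demand that $\log\nu(B(x,r))/\log r$ not drop below $s(\alpha)$ there that dictates the weights $p^{-s_N(k+\alpha)}$ and thus produces the equation \eqref{fc}; a careless choice — the uniform law on $\{1,\dots,N\}$, or the equilibrium measure of the unconstrained system — yields a strictly smaller exponent. The remaining ingredients (the similarity structure of $T_p$ on cylinders, the summation of the covering series, and the two-sided estimates of $\nu$ on cylinders) are routine once this geometry is in place.
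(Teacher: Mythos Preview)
Your upper bounds match the paper's exactly: the same covering by cylinders $\{x\in I_{n-1}:a_n\ge(1-\varepsilon)\psi(n)\}$ and the same geometric summation governed by $p^{-\beta s}f(s)$, with the three regimes arising from whether the exponent $\psi(n)/n$ tends to $0$, $\alpha$, or $\infty$.

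For the lower bounds you take a somewhat different route from the paper. In case~(i) the paper does \emph{not} run a mass-distribution argument: it constructs the same kind of Cantor set $E_M(\psi)$ (forced digits $\lfloor\psi(m_k)\rfloor+1$ along $m_k=2^k$, free digits bounded by $M$), but then defines a surjection $f:E_M(\psi)\to E_M$ by simply deleting the forced positions and checks that $f$ is $(1+\varepsilon)^{-1}$-H\"older for every $\varepsilon>0$; since $\dim_{\rm H}E_M=s_M\to1$ by the self-similar formula, this gives $\dim_{\rm H}E_M(\psi)\ge s_M$ without ever building a measure. In case~(ii) the paper outsources the lower bound entirely to \cite[Lemma~4.7]{HYZ18}, citing that $\dim_{\rm H}F(n_k,\alpha,M)$ is at least the solution of $\sum_{k\le M}(p-1)p^{-(k+\alpha)s}=1$. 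Your scheme is a unified mass-distribution argument covering both cases, with the key insight being the tilt $p^{-s_N(k+\alpha)}$ on the free digits so that the accumulated surplus $\alpha s_N$ per step pays for the forced jump of size $\approx\alpha n_k$ at position $n_k$; you correctly isolate the dangerous scales (balls strictly inside the forced jump, where $\nu$ is constant while the radius shrinks by $p^{\alpha n_k}$) and note that the untilted Bernoulli or uniform measure would fail there. Your approach is more self-contained and makes explicit what \cite{HYZ18} is doing behind the scenes; the paper's H\"older-map trick in case~(i) is slicker and avoids any pointwise local-dimension computation, at the cost of being specific to the sublinear regime.
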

\begin{rem}\label{slx}
   For any $0<\alpha<\infty$, we deduce from \eqref{fc} that  $p^{\alpha s}(p^s-1)=p-1$, which implies that the function $s:\mathbb{R}^+\rightarrow[0,1]$ is decreasing continuous satisfying
\[\lim\limits_{\alpha\to0}s(\alpha)=1\ \ \text{and}\ \ \lim\limits_{\alpha\to\infty}s(\alpha)=0.\]
\end{rem}
   From the proof Theorem \ref{zdl}, we could easily obtain the following result.
\begin{corollary}\label{lsc}
   Let $a_n(x)$ be the $n$-th digit of Schneider's $p$-adic continued fraction. Then \[\dim_{\rm H}\Big\{x \in p\mathbb{Z}_p:\ \limsup\limits_{n\to\infty}a_n(x)=\infty\Big\}=1.\]
\end{corollary}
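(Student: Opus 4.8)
The plan is to deduce this from Theorem~\ref{zdl}(i) by monotonicity of Hausdorff dimension. Fix any auxiliary function $\psi_0:\mathbb{N}\to\mathbb{R}^+$ with $\psi_0(n)\to\infty$ and $\psi_0(n)/n\to0$ --- for concreteness $\psi_0(n)=\log n$. The first step is the elementary inclusion
\[
E_{\sup}(\psi_0)\subseteq\Big\{x\in p\mathbb{Z}_p:\ \limsup_{n\to\infty}a_n(x)=\infty\Big\}.
\]
Indeed, if $\limsup_{n\to\infty}a_n(x)/\psi_0(n)=1$ then $a_n(x)\ge\psi_0(n)/2$ for infinitely many $n$, and since $\psi_0(n)\to\infty$ this already forces $\limsup_{n\to\infty}a_n(x)=\infty$.

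Since $\psi_0(n)/n\to0$, Theorem~\ref{zdl}(i) gives $\dim_{\rm H}E_{\sup}(\psi_0)=1$, so by monotonicity of Hausdorff dimension under inclusion,
\[
\dim_{\rm H}\Big\{x\in p\mathbb{Z}_p:\ \limsup_{n\to\infty}a_n(x)=\infty\Big\}\ge1.
\]
The reverse inequality is immediate because the set in question is a subset of $p\mathbb{Z}_p$, whose Hausdorff dimension with respect to the $p$-adic metric equals $1$ (this also follows a posteriori from Theorem~\ref{zdl}(i) itself, which exhibits a subset of dimension $1$). Combining the two bounds yields the claim.

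There is essentially no obstacle here, since all the work is carried by the full-dimension construction used to prove Theorem~\ref{zdl}(i); the only point meriting a sentence of care is the trivial implication above. Alternatively, one may bypass the inclusion entirely and observe that the Cantor-type subset of $p\mathbb{Z}_p$ built in the proof of Theorem~\ref{zdl}(i) to realise the prescribed $\limsup$ consists entirely of points whose digit sequence $\{a_n(x)\}_{n\ge1}$ is unbounded; reading off its dimension, computed there to be $1$, then supplies the lower bound directly.
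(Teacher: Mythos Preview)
Your argument is correct. The inclusion $E_{\sup}(\psi_0)\subseteq\{x:\limsup_n a_n(x)=\infty\}$ is valid for any $\psi_0$ with $\psi_0(n)\to\infty$, and Theorem~\ref{zdl}(i) then supplies a subset of full dimension; the upper bound is trivial. (A cosmetic point: with $\psi_0(n)=\log n$ you have $\psi_0(1)=0\notin\mathbb{R}^+$, so strictly speaking take $\psi_0(n)=\log(n+1)$ or $\sqrt{n}$ instead.)

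The paper's own proof takes a slightly different route: rather than invoking case~(i) of Theorem~\ref{zdl} as a black box, it goes back to the explicit Cantor set $F(n_k,\alpha,M)$ constructed in the proof of case~(ii), observes that this set is contained in $\{x:\limsup_n a_n(x)=\infty\}$, obtains the lower bound $s(\alpha)$, and then lets $\alpha\to0$ using Remark~\ref{slx}. Your approach is more economical, since case~(i) already delivers dimension $1$ without a limiting argument, and it uses only the \emph{statement} of Theorem~\ref{zdl} rather than its proof. The paper's route, on the other hand, makes the Corollary independent of the (somewhat more delicate) H\"older-map argument used for case~(i). Your closing remark---that the subset $E_M(\psi)$ built in case~(i) already consists of points with unbounded digits---is also a valid third alternative and is closest in spirit to what the paper actually does, just with the case-(i) set in place of the case-(ii) set.
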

   Replacing upper limit by lower limit or limit in the set $E_{\sup}(\psi)$, we also obtain the Hausdorff dimension of the sets \[E_{\inf}(\psi)=\left\{x \in p\mathbb{Z}_p: \liminf\limits_{n\to\infty}\frac{a_n(x)}{\psi(n)}=1\right\}\ \text{and}\ E(\psi)=\left\{x \in p\mathbb{Z}_p:\ \lim\limits_{n\to\infty}\frac{a_n(x)}{\psi(n)}=1\right\}.\]
\begin{theorem}\label{liminf}
   Let $\psi:\mathbb{N}\rightarrow\mathbb{R}^+$ be a function satisfying $\psi(n)\to\infty$ as $n\to\infty$. Then \[\dim_{\rm H}E_{\inf}(\psi)=\dim_{\rm H}E(\psi)=0.\]
\end{theorem}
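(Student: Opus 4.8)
\textbf{Proof proposal for Theorem~\ref{liminf}.}
The plan is to sandwich both sets, by trivial inclusions, between a single set whose dimension is easy to bound, namely
\[
A:=\Big\{x\in p\mathbb{Z}_p:\ a_n(x)\to\infty\ \text{as}\ n\to\infty\Big\},
\]
and then to show $\dim_{\rm H}A=0$ by an explicit covering. First, $E(\psi)\subseteq E_{\inf}(\psi)\subseteq A$: the first inclusion is immediate, and for the second, if $\liminf_{n\to\infty}a_n(x)/\psi(n)=1$ then $a_n(x)>\psi(n)/2$ for all large $n$, which together with $\psi(n)\to\infty$ forces $a_n(x)\to\infty$. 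Since the Hausdorff dimension is monotone, it suffices to prove $\dim_{\rm H}A=0$.

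The only structural input is the cylinder geometry of Schneider's expansion. From \eqref{tp} one has $x=p^{a_1(x)}/(b_1(x)+T_p(x))$, and since $b_1(x)\in\{1,\ldots,p-1\}$ while $T_p(x)\in p\mathbb{Z}_p$ we get $|b_1(x)+T_p(x)|_p=1$; hence, whenever $x$ and $y$ share their first digit pair, $|x-y|_p=p^{-a_1(x)}|T_px-T_py|_p$. Iterating, two points in the same rank-$n$ cylinder $[a_1,b_1;\ldots;a_n,b_n]$ satisfy $|x-y|_p=p^{-(a_1+\cdots+a_n)}|T_p^nx-T_p^ny|_p\le p^{-(a_1+\cdots+a_n)}$, so that cylinder has diameter at most $p^{-(a_1+\cdots+a_n)}$; moreover, for each fixed $(a_1,\ldots,a_n)\in\mathbb{N}^n$ there are at most $(p-1)^n$ such cylinders, one per choice of $(b_1,\ldots,b_n)\in\{1,\ldots,p-1\}^n$.

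Now fix $s>0$ and choose $k\in\mathbb{N}$ so large that $q:=(p-1)p^{-ks}/(1-p^{-s})<1$, which is possible since the numerator tends to $0$ as $k\to\infty$. Writing $A_{N,k}:=\{x\in p\mathbb{Z}_p:a_n(x)\ge k\ \text{for all}\ n\ge N\}$, we have $A\subseteq\bigcup_{N\ge1}A_{N,k}$. For each $N$ and each $n\ge N$, the set $A_{N,k}$ is covered by the rank-$n$ cylinders with $a_i\ge1$ for $i<N$ and $a_i\ge k$ for $N\le i\le n$, so the sum of the $s$-th powers of their diameters is at most
\[
(p-1)^n\Big(\sum_{a\ge1}p^{-as}\Big)^{N-1}\Big(\sum_{a\ge k}p^{-as}\Big)^{n-N+1}=C(s,N,k)\,q^{\,n},
\]
where $C(s,N,k)$ is a constant independent of $n$. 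Letting $n\to\infty$ gives $\mathcal{H}^{s}(A_{N,k})=0$ for every $N$, hence $\mathcal{H}^{s}(A)=0$ and $\dim_{\rm H}A\le s$; letting $s\to0^{+}$ yields $\dim_{\rm H}A=0$, and therefore $\dim_{\rm H}E_{\inf}(\psi)=\dim_{\rm H}E(\psi)=0$.

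The argument is essentially bookkeeping once the cylinder diameters and multiplicities are recorded; the only mild point is checking that, for a fixed $s>0$, the geometric ratio $q$ can be forced below $1$ by taking $k$ large, which is exactly what lets the limit in $n$ annihilate the cover. I do not anticipate a genuine obstacle: a single large digit $a_n$ contracts its cylinder by the factor $p^{a_n}$, so even the weak requirement $a_n\to\infty$ already collapses the dimension to $0$, in sharp contrast with the $\limsup$ set of Theorem~\ref{zdl} and with Corollary~\ref{lsc}.
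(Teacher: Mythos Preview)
Your proof is correct. The reduction step---showing $E(\psi)\subseteq E_{\inf}(\psi)\subseteq A=\{x:a_n(x)\to\infty\}$ via $a_n(x)>\psi(n)/2$ for large $n$---is exactly what the paper does. The difference lies in how $\dim_{\rm H}A=0$ is established: the paper invokes Lemma~\ref{qyww}, which in turn is deduced from Lemma~\ref{pj} (the result $\dim_{\rm H}\{x:\frac{a_1+\cdots+a_n}{n}\to\infty\}=0$), itself imported from \cite[Lemma~4.5]{HYZ18}. You instead give a direct, self-contained covering argument: decompose $A=\bigcup_N A_{N,k}$, cover each $A_{N,k}$ by rank-$n$ cylinders, and exploit that for any $s>0$ one can choose $k$ so that the tail geometric ratio $(p-1)p^{-ks}/(1-p^{-s})$ drops below $1$. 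This buys independence from the external reference and is arguably more transparent, since it isolates precisely why ``all digits eventually $\ge k$'' forces dimension zero; the paper's route is shorter on the page but relies on a stronger auxiliary statement (about Ces\`aro averages) than is actually needed here. One cosmetic remark: your cylinder-diameter bound $p^{-(a_1+\cdots+a_n)}$ is off by a harmless factor of $p$ from the paper's $p^{-(1+a_1+\cdots+a_n)}$ (Proposition~\ref{zjkh}), since $T_p^n x\in p\mathbb{Z}_p$ gives an extra $p^{-1}$; this has no effect on the dimension computation.
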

   From the proof of Theorem \ref{liminf}, we immediately obtain the Hausdorff dimension of the uniform set defined by the digits $\{a_n(x)\}_{n\geq1}$ in Schneider's $p$-adic continued fraction.
\begin{corollary}\label{ry}
   Let $\psi:\mathbb{N}\rightarrow\mathbb{R}^+$ be a function satisfying $\psi(n)\to\infty$ as $n\to\infty$. Then \[\dim_{\rm H}\Big\{x\in p\mathbb{Z}_p:\ a_n(x)\geq\psi(n), \ \ \text{for all}\ n\in\mathbb{N}\Big\}=0.\]
\end{corollary}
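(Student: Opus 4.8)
The plan is to prove only the upper bound, i.e.\ that $\dim_{\rm H}\{x\in p\mathbb{Z}_p:\ a_n(x)\ge\psi(n)\ \text{for all}\ n\in\mathbb{N}\}\le\varepsilon$ for every $\varepsilon>0$; the matching lower bound is vacuous, since the set is non-empty (prescribe the digits $a_n=\lceil\psi(n)\rceil$ with any admissible $b_n$) and Hausdorff dimension is non-negative. Write $U(\psi)$ for this uniform set. I would first observe that $U(\psi)$ need not be contained in $E_{\inf}(\psi)$ --- on $U(\psi)$ the lower limit of $a_n(x)/\psi(n)$ is only known to be $\ge1$ --- so the corollary is not a formal consequence of Theorem~\ref{liminf}; what transfers is the \emph{covering argument} used in the proof of that theorem to bound $\dim_{\rm H}E_{\inf}(\psi)$ from above, and for $U(\psi)$ this argument is even shorter, as no reduction of the type ``$a_n(x)\ge(1-\varepsilon)\psi(n)$ for all large $n$'' is required.

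Next I would cover $U(\psi)$ by fundamental cylinders: for every $n\ge1$,
\[
U(\psi)\ \subseteq\ \bigcup I_n(a_1,b_1,\dots,a_n,b_n),
\]
the union being over all admissible blocks with $a_i\ge\lceil\psi(i)\rceil$ and $b_i\in\{1,\dots,p-1\}$ for $1\le i\le n$. Invoking the cylinder geometry already used for Theorems~\ref{zdl} and~\ref{liminf} --- the order-$n$ cylinder is a $p$-adic ball whose diameter is at most $C\,p^{-(a_1+\cdots+a_n)}$ for an absolute constant $C$ --- together with the fact that each $b_i$ runs over at most $p-1$ values, the $s$-dimensional Hausdorff sum of this cover satisfies, for every $s>0$,
\[
\sum\bigl|I_n(a_1,b_1,\dots,a_n,b_n)\bigr|^{s}\ \le\ C^{s}\prod_{i=1}^{n}\Bigl((p-1)\sum_{a\ge\lceil\psi(i)\rceil}p^{-sa}\Bigr)\ =\ C^{s}\prod_{i=1}^{n}\frac{(p-1)\,p^{-s\lceil\psi(i)\rceil}}{1-p^{-s}} .
\]

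To conclude, fix $s=\varepsilon>0$. Because $\psi(i)\to\infty$, each factor $(p-1)p^{-\varepsilon\lceil\psi(i)\rceil}/(1-p^{-\varepsilon})$ tends to $0$ as $i\to\infty$; hence it is $<\tfrac12$ for all sufficiently large $i$, so the partial products $\prod_{i=1}^{n}\frac{(p-1)p^{-\varepsilon\lceil\psi(i)\rceil}}{1-p^{-\varepsilon}}$ tend to $0$ as $n\to\infty$. Therefore $\mathcal H^{\varepsilon}(U(\psi))=0$, so $\dim_{\rm H}U(\psi)\le\varepsilon$, and letting $\varepsilon\downarrow0$ gives $\dim_{\rm H}U(\psi)=0$. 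I do not foresee any genuine difficulty; the single non-trivial ingredient is the diameter estimate for Schneider cylinders, which is part of the preliminaries. A variant that avoids infinite products: given $\varepsilon>0$, pick $M$ with $(p-1)p^{-\varepsilon M}/(1-p^{-\varepsilon})<1$ and then $N$ with $\psi(n)\ge M$ for all $n\ge N$; split $U(\psi)$ according to its first $N-1$ digits and apply $T_p^{N-1}$ to each piece (on an order-$(N-1)$ cylinder this map multiplies $p$-adic distances by the constant $p^{a_1+\cdots+a_{N-1}}$, so it is a similarity and preserves Hausdorff dimension), thereby reducing the claim to $\dim_{\rm H}\{x\in p\mathbb{Z}_p:\ a_n(x)\ge M\ \text{for all}\ n\}\le\varepsilon$, which the same cylinder count yields immediately.
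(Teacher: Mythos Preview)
Your covering argument is correct: the cylinder $I_n(a_1,b_1;\ldots;a_n,b_n)$ has diameter $p^{-(1+a_1+\cdots+a_n)}$ by Proposition~\ref{zjkh}, the factorisation of the $s$-sum is exactly as you wrote, and since $\psi(i)\to\infty$ the factors tend to $0$ for any fixed $s>0$, forcing $\mathcal H^{s}(U(\psi))=0$. Your alternative reduction via the similarity $T_p^{N-1}$ on each order-$(N-1)$ cylinder is also valid (Lemma~\ref{lem1} gives $|x-y|_p=p^{-(a_1+\cdots+a_{N-1})}|T_p^{N-1}(x)-T_p^{N-1}(y)|_p$ on such a cylinder).

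However, the route is genuinely different from the paper's. The paper does not estimate a Hausdorff sum at all for this corollary (or for Theorem~\ref{liminf}); it simply observes the inclusion
\[
\{x:\ a_n(x)\ge\psi(n)\ \text{for all}\ n\}\ \subseteq\ \{x:\ a_n(x)\to\infty\}
\]
and invokes Lemma~\ref{qyww}, which in turn rests on the (cited) result Lemma~\ref{pj} from \cite{HYZ18} that $\{x:\ (a_1+\cdots+a_n)/n\to\infty\}$ has dimension $0$. So the paper's proof is a one-line inclusion plus an imported lemma, whereas yours is a short, self-contained covering computation that bypasses Lemmas~\ref{pj} and~\ref{qyww} altogether. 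Your argument in fact \emph{reproves} Lemma~\ref{qyww} as a special case (take $\psi(n)=M$ for arbitrarily large constants $M$ and use countable stability), which is a small bonus. The trade-off: the paper's route is terser within the paper's logical structure; yours is independent of the external reference and makes the zero-dimension mechanism transparent.
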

   Based on Theorems \ref{zdl} and \ref{liminf}, we are also interested in the Hausdorff dimension of the intersection of the sets defined by the lower and upper limits. More precisely, for any $0\leq\alpha_1<\alpha_2\leq\infty$, we shall consider the Hausdorff dimension of the set \[E^{\sup}_{\inf}(\psi,\alpha_1,\alpha_2)=\left\{x\in p\mathbb{Z}_p:\liminf_{n\rightarrow\infty}\dfrac{a_n(x)}{\psi(n)}=\alpha_1,~\limsup_{n\rightarrow\infty}\dfrac{a_n(x)}{\psi(n)}=\alpha_2\right\}.\]
\begin{theorem}\label{liminfsup}
   Let $\psi:\mathbb{N}\rightarrow\mathbb{R}^+$ be a function satisfying $\psi(n)\to\infty$ as $n\to\infty$.
\begin{enumerate}[(i)]
\item For any $0<\alpha_1<\alpha_2\leq\infty$, we have $\dim_{\rm H}E^{\sup}_{\inf}(\psi,\alpha_1,\alpha_2)=0$.
\item For any $0=\alpha_1<\alpha_2<\infty$, we have that
\begin{enumerate}[(a)]
\item if $\psi(n)/n\to0$ as $n\to\infty$, then $\dim_{\rm H}E^{\sup}_{\inf}(\psi,\alpha_1,\alpha_2)=1$,
\item if $\psi(n)/n\to\alpha\ (0<\alpha<\infty)$ as $n\to\infty$, then $\dim_{\rm H}E^{\sup}_{\inf}(\psi,\alpha_1,\alpha_2)=s(\alpha\alpha_2)$,
where $s(\alpha\alpha_2)$ is the unique real solution of the equation
\[\sum\limits_{n\geq1}(p-1)p^{-(n+\alpha\alpha_2)s}=1,\]
\item if $\psi(n)/n\to\infty$ as $n\to\infty$, then $\dim_{\rm H}E^{\sup}_{\inf}(\psi,\alpha_1,\alpha_2)=0$.
\end{enumerate}
\end{enumerate}
\end{theorem}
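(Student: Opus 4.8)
The strategy is to reduce everything to Theorems \ref{zdl} and \ref{liminf} by rescaling $\psi$. Fix $\psi$ as in the statement. For any constant $\beta\in(0,\infty)$ the function $\beta\psi$ again maps $\mathbb{N}$ to $\mathbb{R}^+$ and tends to infinity, and for $x\in p\mathbb{Z}_p$ one has $\liminf_{n}a_n(x)/\psi(n)=\alpha_1$ exactly when $\liminf_{n}a_n(x)/(\alpha_1\psi(n))=1$ (provided $\alpha_1>0$), and likewise $\limsup_{n}a_n(x)/\psi(n)=\alpha_2$ exactly when $\limsup_{n}a_n(x)/(\alpha_2\psi(n))=1$ (provided $0<\alpha_2<\infty$). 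Consequently, whenever $\alpha_1>0$ we have the inclusion $E^{\sup}_{\inf}(\psi,\alpha_1,\alpha_2)\subseteq E_{\inf}(\alpha_1\psi)$, and whenever $\alpha_1=0<\alpha_2<\infty$ we have $E^{\sup}_{\inf}(\psi,0,\alpha_2)\subseteq E_{\sup}(\alpha_2\psi)$.

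These inclusions already settle several cases. For part (i), since $\alpha_1>0$, Theorem \ref{liminf} applied to $\alpha_1\psi$ gives $\dim_{\rm H}E_{\inf}(\alpha_1\psi)=0$, hence $\dim_{\rm H}E^{\sup}_{\inf}(\psi,\alpha_1,\alpha_2)=0$. For part (ii)(c), $\alpha_2\psi(n)/n\to\infty$, so Theorem \ref{zdl}(iii) yields $\dim_{\rm H}E_{\sup}(\alpha_2\psi)=0$ and therefore $\dim_{\rm H}E^{\sup}_{\inf}(\psi,0,\alpha_2)=0$. For the upper bounds in parts (ii)(a) and (ii)(b), we have $\alpha_2\psi(n)/n\to0$ resp. $\to\alpha\alpha_2$, so Theorem \ref{zdl}(i) resp. (ii) bounds $\dim_{\rm H}E^{\sup}_{\inf}(\psi,0,\alpha_2)$ above by $1$ resp. by $s(\alpha\alpha_2)$, the latter being the unique root of $\sum_{n\ge1}(p-1)p^{-(n+\alpha\alpha_2)s}=1$.

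It remains to produce the matching lower bounds in parts (ii)(a) and (ii)(b), and this is where the only genuine work lies. The plan is to revisit the Cantor-type set and the Bernoulli-type measure used to prove the lower bounds of Theorem \ref{zdl}(i) and (ii), now applied to the function $\phi:=\alpha_2\psi$. In that construction one fixes a very sparse sequence $(n_k)_{k\ge1}$, prescribes $a_{n_k}=\lfloor\phi(n_k)\rfloor$ at those positions, and lets the remaining digits range over a suitably truncated set according to the optimal weight, the resulting measure $\mu$ being supported on a subset of $E_{\sup}(\phi)$ of full dimension ($1$ in case (a), $s(\alpha\alpha_2)$ in case (b)). The key observation is that a $\mu$-typical $x$ automatically lies in the smaller set $E^{\sup}_{\inf}(\psi,0,\alpha_2)$: it already satisfies $\limsup_{n}a_n(x)/\psi(n)=\alpha_2$ by construction (the bulk digits being chosen $o(\psi(n))$, via a Borel--Cantelli estimate, so they cannot push the upper limit above $\alpha_2$), and it satisfies $\liminf_{n}a_n(x)/\psi(n)=0$ because for $\mu$-a.e.\ $x$ one has $a_n(x)=1$ for infinitely many $n\notin\{n_k:k\ge1\}$ (these events are $\mu$-independent with probabilities bounded below), while $a_n(x)\ge1$ forces the lower limit to be nonnegative. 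Hence $\mu$ witnesses $\dim_{\rm H}E^{\sup}_{\inf}(\psi,0,\alpha_2)\ge1$ in case (a) and $\ge s(\alpha\alpha_2)$ in case (b), completing the proof together with the upper bounds above.

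The main obstacle is thus not the rescaling bookkeeping but the verification, inside the construction, that one can simultaneously keep the bulk digits small enough --- bounded by $\varepsilon_n\psi(n)$ with $\varepsilon_n\downarrow0$ chosen slowly enough that $\varepsilon_n\psi(n)\to\infty$ --- to force $\limsup_{n}a_n/\psi(n)$ to equal \emph{exactly} $\alpha_2$ and not something larger, while still retaining the full Hausdorff dimension of the Cantor set. This is precisely the computation already carried out in the proof of Theorem \ref{zdl}, so once that construction is in hand the present theorem follows; and adding the constraint $\liminf=0$ costs nothing, since, as noted, it is met by $\mu$-almost every point of the set built for Theorem \ref{zdl}.
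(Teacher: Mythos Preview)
Your proposal is correct and follows essentially the same route as the paper: reduce the upper bounds and the trivial cases to Theorems~\ref{zdl} and~\ref{liminf} via the rescalings $\psi\mapsto\alpha_1\psi$ and $\psi\mapsto\alpha_2\psi$, and for the lower bounds in (ii)(a),(b) recycle the Cantor subsets built for Theorem~\ref{zdl} with $\phi=\alpha_2\psi$, checking that they already lie inside $E^{\sup}_{\inf}(\psi,0,\alpha_2)$.

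One simplification is worth noting. You frame the lower bound via a Bernoulli-type measure $\mu$, a Borel--Cantelli argument to keep the bulk digits $o(\psi(n))$, and a second Borel--Cantelli to get $a_n=1$ infinitely often $\mu$-a.e.; and you flag the choice of a slowly decreasing cutoff $\varepsilon_n\psi(n)$ as ``the main obstacle''. None of this is needed. In the constructions actually used for Theorem~\ref{zdl} --- the sets $E_M(\alpha_2\psi)$ in case~(a) and $F(n_k,\alpha\alpha_2,M)$ in case~(b) --- the digits at all non-prescribed positions are \emph{deterministically} bounded by a fixed integer $M$. Since $\psi(n)\to\infty$, this forces $a_n(x)/\psi(n)\to0$ along the bulk positions for \emph{every} point of the Cantor set, so $\liminf_n a_n(x)/\psi(n)=0$ and $\limsup_n a_n(x)/\psi(n)=\alpha_2$ hold setwise, not just almost everywhere. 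The paper exploits exactly this: it simply observes the set inclusions $E_M(\alpha_2\psi)\subseteq E^{\sup}_{\inf}(\psi,0,\alpha_2)$ and $F(n_k,\alpha\alpha_2,M)\subseteq E^{\sup}_{\inf}(\psi,0,\alpha_2)$ and inherits the dimension bounds already established. Your probabilistic detour works, but the deterministic containment is both shorter and stronger.
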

\begin{rem}\label{sz}
   Replacing the level $1$ by other real numbers in the sets $E_{\inf}(\psi)$ and $E(\psi)$, we see that their Hausdorff dimensions are the same. While for the set $E_{\sup}(\psi)$, the dimensional result presents different phenomenon. To be precise, for any $0<\alpha_2<\infty\ (\alpha_2\neq1)$,
\begin{enumerate}[(i)]
\item if $\psi(n)/n\to0$ as $n\to\infty$, then
\[\dim_{\rm H}E_{\sup}(\psi)=\dim_{\rm H}\left\{x\in p\mathbb{Z}_p:\ \limsup\limits_{n\to\infty}\frac{a_n(x)}{\psi(n)}=\alpha_2\right\}=1,\]
\item if $\psi(n)/n\to\alpha\ (0<\alpha<\infty)$ as $n\to\infty$, then
\[\dim_{\rm H}E_{\sup}(\psi)\neq\dim_{\rm H}\left\{x\in p\mathbb{Z}_p:\ \limsup\limits_{n\to\infty}\frac{a_n(x)}{\psi(n)}=\alpha_2\right\}=\dim_{\rm H}E^{\sup}_{\inf}(\psi,0,\alpha_2),\]
\item if $\psi(n)/n\to\infty$ as $n\to\infty$, then
\[\dim_{\rm H}E_{\sup}(\psi)=\dim_{\rm H}\left\{x\in p\mathbb{Z}_p:\ \limsup\limits_{n\to\infty}\frac{a_n(x)}{\psi(n)}=\alpha_2\right\}=0.\]
\end{enumerate}
\end{rem}

   In the end, let us remark that the convergence exponent (see P\'{o}lya and Szeg\H{o} \cite[p.26]{PS72}) of the sequence of the digits $\{a_n(x)\}_{n \geq 1}$ in Schneider's $p$-adic continued fraction, given by
\begin{equation*}
\tau(x):= \inf\Big\{s \geq 0: \sum\limits_{n \geq 1} a^{-s}_n(x)<\infty\Big\},
\end{equation*}
   reflects how fast the growth rate of the digits $\{a_n(x)\}_{n\geq1}$ tending to infinity. Let \[T(\alpha)=\big\{x\in p\mathbb{Z}_p:\ \tau(x)=\alpha\big\},\quad(0\leq\alpha\leq\infty).\] Then we shall calculate the multifractal spectrum of $\tau(x)$, i.e, the dimensional function $\alpha\mapsto \dim_{\rm H}T(\alpha)$.
\begin{theorem}\label{cv}
For any $0\leq\alpha\leq\infty$, we have
\begin{equation*}
\dim_{\rm H}T(\alpha)=
\begin{cases}
1,\ \ \ \ \ \ \ \ \alpha=\infty,\cr
0,\ \ \ 0\leq\alpha<\infty.
\end{cases}
\end{equation*}
\end{theorem}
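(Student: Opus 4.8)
The plan is to deduce the two cases of the statement from results already in the excerpt together with one elementary observation about the convergence exponent, namely that $\tau(x)<\infty$ if and only if $a_n(x)\to\infty$. Concretely: if $\tau(x)<\infty$, then taking $s=\tau(x)+1$ gives $\sum_n a_n(x)^{-s}<\infty$, so $a_n(x)^{-s}\to0$ and hence $a_n(x)\to\infty$; conversely, if $a_n(x)\not\to\infty$ there is a bounded subsequence $a_{n_k}(x)\le C$, whence $\sum_n a_n(x)^{-s}=\infty$ for every $s\ge0$ and so $\tau(x)=\infty$ (the infimum defining $\tau$ is over the empty set). This yields the inclusions
\[
T(\infty)\supseteq\big\{x\in p\mathbb{Z}_p:\ a_n(x)\not\to\infty\big\},\qquad \bigcup_{0\le\alpha<\infty}T(\alpha)=\{\tau<\infty\}\subseteq\big\{x\in p\mathbb{Z}_p:\ a_n(x)\to\infty\big\},
\]
so only a lower bound is needed when $\alpha=\infty$ and only an upper bound when $0\le\alpha<\infty$.

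For $\alpha=\infty$ I would argue that $\{a_n\not\to\infty\}$ has full $\mu_p$-measure. Since the digits $\{a_n\}_{n\ge1}$ are i.i.d.\ with $\mu_p(a_1=1)=(p-1)/p>0$, the second Borel--Cantelli lemma gives $a_n(x)=1$ for infinitely many $n$, for $\mu_p$-almost every $x$; in particular $a_n(x)\not\to\infty$ almost everywhere. As $\mu_p$ satisfies $\mu_p(B(x,r))\asymp r$, the mass distribution principle shows that any set of full $\mu_p$-measure has Hausdorff dimension $\dim_{\rm H}(p\mathbb{Z}_p)=1$, so $\dim_{\rm H}T(\infty)=1$. (One could instead invoke Corollary~\ref{lsc}, or apply Theorem~\ref{zdl}(i) to $\psi(n)=\log n/\log p$ after noting $E_{\sup}(\psi)\subseteq T(\infty)$ because $\sum_n(\log n)^{-s}$ diverges for every $s\ge0$.)

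For $0\le\alpha<\infty$ it remains, by the inclusion above, to prove $\dim_{\rm H}\{x\in p\mathbb{Z}_p:\ a_n(x)\to\infty\}=0$. Fix $m\in\mathbb{N}$ and write $\{a_n\to\infty\}\subseteq\bigcup_{N\ge1}F_{m,N}$ with $F_{m,N}=\{x:\ a_n(x)\ge m\ \text{for all}\ n\ge N\}$. Covering $F_{m,N}$ by the rank-$n$ cylinders it meets, and using that the cylinder determined by $(a_1,b_1,\dots,a_n,b_n)$ has diameter comparable to $p^{-(a_1+\cdots+a_n)}$ while each $b_i$ takes $p-1$ values, one gets for $n\ge N$
\[
\sum_{I_n\cap F_{m,N}\ne\emptyset}(\mathrm{diam}\,I_n)^s\ \asymp\ \Big((p-1)\tfrac{p^{-s}}{1-p^{-s}}\Big)^{N-1}\Big((p-1)\tfrac{p^{-ms}}{1-p^{-s}}\Big)^{n-N+1},
\]
which tends to $0$ as $n\to\infty$ as soon as $(p-1)p^{-ms}<1-p^{-s}$. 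For each fixed $s>0$ this holds once $m$ is large enough, so $\dim_{\rm H}F_{m,N}\le s$ for every $N$, whence $\dim_{\rm H}\{a_n\to\infty\}\le\sup_N\dim_{\rm H}F_{m,N}\le s$; letting $s\downarrow0$ gives $\dim_{\rm H}\{a_n\to\infty\}=0$ and therefore $\dim_{\rm H}T(\alpha)=0$ for every finite $\alpha$. The only genuinely quantitative step is this covering estimate, which is exactly the one already used to prove Corollary~\ref{ry}; the hard part, such as it is, is the sharp equivalence $\tau(x)<\infty\iff a_n(x)\to\infty$, which collapses the finite-$\alpha$ layers onto the single null-dimensional set $\{a_n\to\infty\}$ while keeping $T(\infty)$ of full measure.
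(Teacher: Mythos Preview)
Your argument is correct in substance, but one statement needs correction: the claimed equivalence $\tau(x)<\infty\iff a_n(x)\to\infty$ is false. If, say, $a_n(x)=\lfloor\log n\rfloor+1$, then $a_n(x)\to\infty$ yet $\sum_n a_n(x)^{-s}$ diverges for every $s\ge0$, so $\tau(x)=\infty$. What you actually prove (and use) is only the implication $\tau(x)<\infty\Rightarrow a_n(x)\to\infty$; the ``converse'' you write down is just its contrapositive. This single implication is all that is needed, and it is exactly what the paper uses as well.

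With that caveat, your approach coincides with the paper's. For $0\le\alpha<\infty$ both proofs reduce to the inclusion $T(\alpha)\subseteq\{x:a_n(x)\to\infty\}$ and then invoke $\dim_{\rm H}\{a_n\to\infty\}=0$; the paper cites this as Lemma~\ref{qyww} (deduced from Lemma~\ref{pj}), while you give a self-contained covering estimate. For $\alpha=\infty$ the paper applies the Birkhoff ergodic theorem to $a_1^{-t}$, obtaining a positive a.e.\ limit for the Ces\`aro averages $n^{-1}\sum_{i\le n}a_i(x)^{-t}$ and hence divergence of $\sum_n a_n(x)^{-t}$ for every $t>0$, so $\tau=\infty$ almost everywhere. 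Your Borel--Cantelli argument (using $\mu_p(a_1=1)>0$ and independence) reaches the same conclusion more elementarily. Both routes show $\mu_p(T(\infty))=1$ and hence $\dim_{\rm H}T(\infty)=1$.
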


   The paper is organized as follows. In Section $2$, some properties of Schneider's $p$-adic continued fraction and useful lemmas for calculating the Hausdorff dimension in the $p$-adic field are provided. The proofs of the main results are given in Section $3$.
   
\section{Preliminaries}
\subsection{Elementary properties of Schneider's p-adic continued fraction}
   For any $n\geq1$ and $(a_n,b_n)\in\mathbb{N}\times\{1,2,\ldots,p-1\}$, we call
\begin{equation*}
I_{n}(a_1,b_1;\ldots;a_n,b_n): =\big\{x\in p\mathbb{Z}_p:\ a_i(x)=a_i, b_i(x)=b_i,\ 1\leq i\leq n\big\}
\end{equation*}
   a cylinder of order $n$ of $p$-adic continued fraction. Denote the $n$-th convergent of $x$ by
\begin{equation}\label{pq}
 \frac{P_n(x)}{Q_n(x)}:=[a_1(x),b_1(x);\ldots;a_n(x),b_n(x)]=\dfrac{p^{a_1(x)}}{b_1(x) +\dfrac{p^{a_2(x)}}{b_2(x)+\ddots+\dfrac{p^{a_n(x)}}{b_n(x)}}},
\end{equation}
   where $P_n(x)$ and $Q_n(x)$ are $p$-adic integers. Let \[P_0(x)=0,\ Q_0(x)=1,\ P_1(x)=p^{a_1(x)},\ Q_1(x)=b_1(x).\] Then $P_n(x)$ and $Q_n(x)$ satisfy the following recursive formula:
\begin{equation}\label{pq2}
\begin{cases}
P_n(x)=b_n(x)P_{n-1}(x)+p^{a_n(x)}P_{n-2}(x)\ \ (n\geq2),\cr
Q_n(x)=b_n(x)Q_{n-1}(x)+p^{a_n(x)}Q_{n-2}(x)\ (n\geq2).
\end{cases}
\end{equation}
   The following lemma is concerned with the fundamental properties of $P_n(x)$ and $Q_n(x)$.
\begin{lemma}[{\cite{HW11,Van93}}]\label{lem1}
For any $n\geq1$, we have
\begin{enumerate}[(i)]
\item $\big(p,Q_n(x)\big)=1$,\ $\big(P_n(x),Q_n(x)\big)=1$,
\item $P_{n-1}(x)Q_n(x)-P_n(x)Q_{n-1}(x)=(-1)^{n}\cdot p^{a_1(x)+\cdots+a_n(x)}$.
\end{enumerate}
\end{lemma}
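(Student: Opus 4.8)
The plan is to prove the two assertions by induction on $n$, ordering the steps so that the determinant identity (ii) can be fed into the coprimality claim in (i). The starting observation is that $P_n(x)$ and $Q_n(x)$ are ordinary non-negative integers: they are built from the integer data $a_i\in\mathbb{N}$, $b_i\in\{1,\ldots,p-1\}$ via the recursion \eqref{pq2} with integer initial values $P_0=0,\,Q_0=1,\,P_1=p^{a_1},\,Q_1=b_1$, so the symbol $(\cdot,\cdot)$ is the usual greatest common divisor in $\mathbb{Z}$. The structural fact I would exploit is that every $a_i(x)=v_p(T_p^{\,i-1}x)\geq 1$, so in \eqref{pq2} each cross term $p^{a_i}$ is divisible by $p$ while each $b_i$ is a unit modulo $p$.

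First I would establish the claim $(p,Q_n(x))=1$, i.e. $p\nmid Q_n(x)$, by induction. The base cases $Q_0=1$ and $Q_1=b_1\in\{1,\ldots,p-1\}$ are immediate. For $n\geq 2$, reducing $Q_n=b_nQ_{n-1}+p^{a_n}Q_{n-2}$ modulo $p$ and using $a_n\geq 1$ gives $Q_n\equiv b_nQ_{n-1}\pmod p$; since $p\nmid b_n$ and $p\nmid Q_{n-1}$ by the inductive hypothesis, it follows that $p\nmid Q_n$. Next I would prove (ii). Setting $D_n:=P_{n-1}Q_n-P_nQ_{n-1}$, the base case is $D_1=P_0Q_1-P_1Q_0=-p^{a_1}$. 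For the inductive step, substituting both recursions of \eqref{pq2} and cancelling the common $b_nP_{n-1}Q_{n-1}$ terms yields $D_n=p^{a_n}\bigl(P_{n-1}Q_{n-2}-P_{n-2}Q_{n-1}\bigr)=-p^{a_n}D_{n-1}$; unwinding this relation produces $D_n=(-1)^n p^{a_1+\cdots+a_n}$, which is exactly (ii).

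Finally, the coprimality $(P_n(x),Q_n(x))=1$ follows by combining the two previous steps. If $d=\gcd(P_n,Q_n)$, then $d$ divides the left-hand side of (ii), hence $d\mid p^{a_1+\cdots+a_n}$, so $d$ is a power of $p$; but $d\mid Q_n$ together with $p\nmid Q_n$ forces $d=1$.

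As for difficulty, this is a standard continued-fraction computation and I do not expect a genuine obstacle. The only point demanding care — and the sole place where the setting departs from the classical Gauss-map case — is that the off-diagonal entry in the recursion \eqref{pq2} is $p^{a_n}$ rather than $\pm 1$. One must therefore verify $a_n\geq 1$ so that these terms vanish modulo $p$; this is precisely what makes both the unit property of $Q_n$ and the exact power of $p$ in the determinant identity come out correctly.
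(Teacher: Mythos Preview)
Your proof is correct and is the standard induction argument for this classical continued-fraction identity. Note that the paper itself does not supply a proof of this lemma; it simply cites \cite{HW11,Van93}, so there is no in-paper proof to compare against, but your argument is exactly the one found in those references.
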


   Next we shall characterize the structure of the cylinder of order $n$. Note that for any $x\in I_{n}(a_1,b_1;\cdots;a_n,b_n)$, we deduce from \eqref{padic}, \eqref{pq} and \eqref{pq2} that \[x=\frac{P_n(x)+T^{n}_p(x)P_{n-1}(x)}{Q_n(x)+T^{n}_p(x)Q_{n-1}(x)},\ \text{with}\ T^{n}_p(x)\in p\mathbb{Z}_p,\] which, in combination with Lemma \ref{lem1}, implies that
\begin{align*} x-\frac{P_n(x)}{Q_n(x)}&=\frac{P_n(x)+T^{n}_p(x)P_{n-1}(x)}{Q_n(x)+T^{n}_p(x)Q_{n-1}(x)}-\frac{P_n(x)}{Q_n(x)}\\
&=\frac{(-1)^{n}p^{a_1(x)+\cdots+a_n(x)}T^{n}_p(x)}{Q_n(x)(Q_n(x)+T^{n}_p(x)Q_{n-1}(x))}
\in p^{1+a_1(x)+\cdots+a_n(x)}\mathbb{Z}_p.
\end{align*}
   In other words, we have
\begin{equation}\label{zjbs}
I_{n}(a_1,b_1;\ldots;a_n,b_n)=\frac{P_n(x)}{Q_n(x)}+p^{1+a_1(x)+\cdots+a_n(x)}\mathbb{Z}_p.
\end{equation}
   It is a fact that the field $\mathbb{Q}_p$ of $p$-adic numbers is non-Archimedean with respect to $|\cdot|_p$ (see \cite{Kob84}), then every point that is contained in a ball is a centre of that ball. This shows that
\begin{prop}[\cite{HW11}]\label{zjkh}
   For any $x\in I_{n}(a_1,b_1;\ldots;a_n,b_n)$, the cylinder can be viewed as a ball with centre $x$ and radius $p^{-(1+a_1(x)+\cdots+a_n(x))}$, i.e., \[I_{n}(a_1,b_1;\ldots;a_n,b_n)=\Big\{y\in p\mathbb{Z}_p:\ |y-x|_p\leq p^{-\big(1+a_1(x)+\cdots+a_n(x)\big)}\Big\}.\]
\end{prop}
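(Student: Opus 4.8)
The plan is to deduce the statement directly from the coset description \eqref{zjbs} together with the ultrametric structure of $|\cdot|_p$. Write $m=1+a_1(x)+\cdots+a_n(x)$ and $c=P_n(x)/Q_n(x)$; note that since $x\in I_n(a_1,b_1;\ldots;a_n,b_n)$ we have $a_i(x)=a_i$ for $1\leq i\leq n$, so $m=1+a_1+\cdots+a_n$ is the same for every point of the cylinder and the prescribed radius is well defined. With this notation \eqref{zjbs} reads $I_n(a_1,b_1;\ldots;a_n,b_n)=c+p^m\mathbb{Z}_p$. Since $p^m\mathbb{Z}_p=\{z\in\mathbb{Q}_p:|z|_p\leq p^{-m}\}$, the coset $c+p^m\mathbb{Z}_p$ is precisely the closed ball $\{y\in\mathbb{Q}_p:|y-c|_p\leq p^{-m}\}$ of radius $p^{-m}$ centred at $c$. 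The task is therefore to recentre this ball at the arbitrary point $x$ of the cylinder.

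First I would record that the given point $x$ lies in the ball: because $x\in I_n=c+p^m\mathbb{Z}_p$ we have $x-c\in p^m\mathbb{Z}_p$, i.e.\ $|x-c|_p\leq p^{-m}$. The key step is then the standard non-Archimedean fact that every point of a ball is a centre of that ball, which follows from the strong triangle inequality: for any $y\in\mathbb{Q}_p$,
\[|y-c|_p=|(y-x)+(x-c)|_p\leq\max\{|y-x|_p,\,|x-c|_p\},\]
and symmetrically with the roles of $c$ and $x$ interchanged. Hence $|y-c|_p\leq p^{-m}$ if and only if $|y-x|_p\leq p^{-m}$, so the two balls coincide:
\[c+p^m\mathbb{Z}_p=\{y\in\mathbb{Q}_p:|y-x|_p\leq p^{-m}\}.\]

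Finally I would check that this ball already sits inside $p\mathbb{Z}_p$, so that the ambient restriction to $p\mathbb{Z}_p$ in the statement is harmless. Since each $a_i(x)\geq1$ we have $m\geq1$; thus any $y$ with $|y-x|_p\leq p^{-m}\leq p^{-1}$ satisfies $y-x\in p\mathbb{Z}_p$, and as $x\in p\mathbb{Z}_p$ and $p\mathbb{Z}_p$ is an additive group it follows that $y\in p\mathbb{Z}_p$. Therefore $\{y\in\mathbb{Q}_p:|y-x|_p\leq p^{-m}\}=\{y\in p\mathbb{Z}_p:|y-x|_p\leq p^{-m}\}$, which combined with the previous displays yields the claimed identity. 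There is no genuine obstacle here; the only point requiring care is keeping the closed ball $p^m\mathbb{Z}_p\subset\mathbb{Q}_p$ distinct from the ambient space $p\mathbb{Z}_p$ and confirming that recentring the ball at $x$ does not escape $p\mathbb{Z}_p$.
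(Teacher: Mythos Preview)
Your argument is correct and follows essentially the same route as the paper: the paper also deduces the proposition directly from the coset description \eqref{zjbs} together with the non-Archimedean fact that every point of a ball is a centre. Your write-up merely spells out the ultrametric recentring and the containment in $p\mathbb{Z}_p$ that the paper leaves implicit.
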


   We now turn to analyse the distributions of the cylinder of order $n$ in the sense of the normalized Haar measure $\mu_p$ on $p\mathbb{Z}_p$, which satisfies (see\ {\cite[p. 399]{Fal90}})
\begin{equation}\label{cd}
\mu_p(pa+p^{k}\mathbb{Z}_p)=p^{1-k},\ (a\in\mathbb{Z}_p,\ k\geq1).
\end{equation}
   It follows from \eqref{zjbs} and \eqref{cd} that
\begin{equation}\label{zjcd}
\mu_p\big(I_{n}(a_1,b_1;\ldots;a_n,b_n)\big)=p^{-\big(a_1(x)+\cdots+a_n(x)\big)}.
\end{equation}
\begin{prop}\label{dg}
For any $k\geq1$,
\[\mu_p\big(\{x\in p\mathbb{Z}_p:\ a_1(x)=k\}\big)=(p-1)p^{-k}.\]
\end{prop}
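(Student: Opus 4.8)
The plan is to reduce the statement to the very definition of the first digit together with the normalization \eqref{cd} of the Haar measure. First I would recall that for $x\in p\mathbb{Z}_p\setminus\{0\}$ we have $a_1(x)=v_p(x)$, so that $a_1(x)=k$ holds precisely when $v_p(x)=k$, i.e. when $x\in p^{k}\mathbb{Z}_p$ but $x\notin p^{k+1}\mathbb{Z}_p$; the point $x=0$ is harmless since it is $\mu_p$-null. Hence
\[
\{x\in p\mathbb{Z}_p:\ a_1(x)=k\}=p^{k}\mathbb{Z}_p\setminus p^{k+1}\mathbb{Z}_p .
\]
Both $p^{k}\mathbb{Z}_p=p\cdot 0+p^{k}\mathbb{Z}_p$ and $p^{k+1}\mathbb{Z}_p=p\cdot 0+p^{k+1}\mathbb{Z}_p$ are balls to which \eqref{cd} applies, giving $\mu_p(p^{k}\mathbb{Z}_p)=p^{1-k}$ and $\mu_p(p^{k+1}\mathbb{Z}_p)=p^{-k}$. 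Since the second ball is contained in the first, subtracting yields $\mu_p(\{x\in p\mathbb{Z}_p:\ a_1(x)=k\})=p^{1-k}-p^{-k}=(p-1)p^{-k}$, as claimed.

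As a cross-check, and because it matches the cylinder formalism already set up, I would also run the argument by splitting on the value of the second coordinate $b_1$. Since $b_1(x)$ is well defined and ranges over $\{1,2,\ldots,p-1\}$, we get the disjoint decomposition
\[
\{x\in p\mathbb{Z}_p:\ a_1(x)=k\}=\bigsqcup_{b=1}^{p-1} I_1(k,b),
\]
into $p-1$ order-one cylinders, each of $\mu_p$-measure $p^{-k}$ by \eqref{zjcd} (with $n=1$). Summing over $b$ gives $(p-1)p^{-k}$ once more. There is essentially no obstacle in this proof; the only point requiring care is bookkeeping with the normalization $\mu_p(p\mathbb{Z}_p)=1$ and the extra ``$+1$'' in the exponents appearing in \eqref{cd} and \eqref{zjbs}, so that the two computations are seen to agree.
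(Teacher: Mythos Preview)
Your proposal is correct. Your cross-check via the disjoint decomposition $\{a_1=k\}=\bigsqcup_{b=1}^{p-1}I_1(k,b)$ together with \eqref{zjcd} is exactly the paper's proof; your first argument, computing $\mu_p(p^{k}\mathbb{Z}_p)-\mu_p(p^{k+1}\mathbb{Z}_p)$ directly from \eqref{cd}, is an equally valid minor variant that bypasses the cylinder formula.
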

\begin{proof}
   Using the non-Archimedean property of the field $\mathbb{Q}_p$, we know that any two distinct of the balls $I_1(a_1,b_1)$ for $(a_1,b_1)\in\mathbb{N}\times\{1,2,\ldots,p-1\}$ are disjoint. Then by \eqref{zjcd}, \[\mu_p\big(\{x\in p\mathbb{Z}_p:\ a_1(x)=k\}\big)=\sum\limits_{1\leq b_1\leq p-1}\mu_p\big(I_1(k,b_1)\big)=(p-1)p^{-k}.\]
\end{proof}
\subsection{Hausdorff dimension in the $p$-adic field}
   Let us first recall the definition of Hausdorff dimension in $p\mathbb{Z}_p$. Let $U$ be any non-empty subset of $p\mathbb{Z}_p$. We denote the diameter of $U$ by \[{\rm diam}(U):=\sup\big\{|x-y|_p:\ x,y\in U\big\}.\] For any $E\subseteq p\mathbb{Z}_p$ and $\delta>0$, the sequence of sets $\{U_i\}_{i\geq1}$ is called a $\delta$-cover of $E$ if $E\subseteq\bigcup_{i\geq1}U_i$ and ${\rm diam}\ U_i\leq\delta$ for all $i\in\mathbb{N}$. Let $s$ be a non-negative real number and let \[\mathcal{H}^s_{\delta}(E)=\inf\left\{\sum_{i\geq1}\big({\rm diam}(U_i)\big)^s:\{U_i\}_{i\geq1} \text{ is a $\delta$-cover of } E\right\}.\] It is easy to see that the quantity $\mathcal{H}^s_{\delta}(E)$ is increasing as $\delta$ decreases, then the $s$-dimensional Hausdorff measure of $E$ is defined as \[\mathcal{H}^s(E):=\lim_{\delta\rightarrow0}\mathcal{H}^s_{\delta}(E)=\sup_{\delta>0}\mathcal{H}^s_{\delta}(E).\] Based on the value of $\mathcal{H}^s(E)$, one can define the Hausdorff dimension of $E$ (see \cite{Fal90}) as \[{\rm dim_H}E:=\inf\Big\{s\geq0:\mathcal{H}^s(E)=0\Big\}:=\sup\Big\{s\geq0:\mathcal{H}^s(E)=\infty\Big\}.\] More details of Hausdorff dimension in the $p$-adic field can be found in \cite{Ab1,Ab2,DDY,LQ19,Me}.
   
   In the following we shall collect and establish some useful lemmas for calculating the Hausdorff dimension of certain sets in Schneider's p-adic continued fraction. The first lemma is to deal with the Jarn\'{\i}k's type set whose digits $\{a_n(x)\}_{n\geq1}$ are bounded for $x\in p\mathbb{Z}_p$. For any positive integer $M\geq2$, let $E_M$ be the set of points in $p\mathbb{Z}_p$ whose digits $\{a_n(x)\}_{n\geq1}$ in Schneider's p-adic continued fraction do not exceed $M$. That is, \[E_{M}=\Big\{x\in p\mathbb{Z}_p:\ 1\leq a_n(x)\leq M, \ \text{for all}\  n\in\mathbb{N}\Big\}.\]
\begin{lemma}\label{yj}
   Let $s_M=\dim_{\rm H}E_{M}$. Then $\lim_{M\to\infty}s_M=1$.
\end{lemma}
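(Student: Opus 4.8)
The plan is to sandwich $s_M=\dim_{\rm H}E_M$ between an explicit quantity and $1$, and then send $M\to\infty$ by an elementary analysis. For each $M\ge 2$ let $\sigma_M\in(0,1)$ be the unique solution of
\[
\sum_{a=1}^{M}(p-1)p^{-a\sigma}=1,
\]
which exists because the left-hand side is continuous and strictly decreasing in $\sigma$, equals $M(p-1)\ge 1$ at $\sigma=0$ and equals $1-p^{-M}<1$ at $\sigma=1$. The upper bound $\dim_{\rm H}E_M\le\dim_{\rm H}p\mathbb{Z}_p=1$ is immediate, since $p\mathbb{Z}_p$ is a disjoint union of $p^{\,n-1}$ balls of radius $p^{-n}$ for every $n$. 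So the task is to prove $\dim_{\rm H}E_M\ge\sigma_M$.

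For the lower bound I would apply the mass distribution principle to the Bernoulli-type probability measure $\nu_M$ supported on $E_M$ which assigns, independently over the coordinates, mass $p^{-a\sigma_M}$ to the digit pair $(a,b)$ with $1\le a\le M$ and $1\le b\le p-1$; the defining equation for $\sigma_M$ is exactly the normalization $\sum_{a=1}^{M}\sum_{b=1}^{p-1}p^{-a\sigma_M}=1$ that makes this legitimate. Then $\nu_M\big(I_n(a_1,b_1;\ldots;a_n,b_n)\big)=\prod_{i=1}^{n}p^{-a_i\sigma_M}$ whenever all $a_i\le M$, and since Proposition~\ref{zjkh} identifies this cylinder with a ball of radius $p^{-(1+a_1+\cdots+a_n)}$, we get $\nu_M(I_n)=p^{\sigma_M}\big({\rm diam}\,I_n\big)^{\sigma_M}$.

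The heart of the argument is the estimate $\nu_M(B)\le \tfrac{p-1}{1-p^{-\sigma_M}}\big({\rm diam}\,B\big)^{\sigma_M}$ for every ball $B\subseteq p\mathbb{Z}_p$, and here the ultrametric structure does most of the work. If $B$ meets $E_M$, choose $x\in B\cap E_M$ with digits $(a_i,b_i)_{i\ge1}$, write ${\rm diam}\,B=p^{-k}$ with $k\ge1$, and let $n$ be the largest index with $1+a_1+\cdots+a_n\le k$; then $B\subseteq I_n(a_1,b_1;\ldots;a_n,b_n)$. Because any two balls in a non-Archimedean space are nested or disjoint, $B$ can meet an order-$(n+1)$ subcylinder only if that subcylinder has smaller diameter than $B$, which forces its new digit $a_{n+1}$ to exceed $t:=k-1-(a_1+\cdots+a_n)$. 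Summing $\nu_M$ over all such subcylinders, bounding $\sum_{j\ge t+1}p^{-j\sigma_M}$ by a geometric series and using the identity $\big({\rm diam}\,B\big)^{\sigma_M}=p^{-\sigma_M}p^{-t\sigma_M}\nu_M(I_n)$, yields the claimed bound (when $B\cap E_M=\emptyset$ it is trivial). Since every nonempty subset of $p\mathbb{Z}_p$ lies in a ball of the same diameter, an arbitrary $\delta$-cover may be refined to one by balls, so the mass distribution principle gives $\mathcal{H}^{\sigma_M}(E_M)\ge\big(\tfrac{p-1}{1-p^{-\sigma_M}}\big)^{-1}>0$ and hence $\dim_{\rm H}E_M\ge\sigma_M$.

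Finally I would check $\sigma_M\to1$. For each fixed $\sigma>0$ the sum $\sum_{a=1}^{M}(p-1)p^{-a\sigma}$ increases with $M$, which together with monotonicity in $\sigma$ forces $(\sigma_M)_{M}$ to be increasing and bounded above by $1$, hence convergent to some $\sigma_\infty\le1$. If $\sigma_\infty<1$, monotone convergence would give $\sum_{a\ge1}(p-1)p^{-a\sigma_\infty}\le1$, contradicting that $\sum_{a\ge1}(p-1)p^{-a\sigma}=(p-1)p^{-\sigma}/(1-p^{-\sigma})$ is strictly decreasing in $\sigma$ with value $1$ at $\sigma=1$. Hence $\sigma_\infty=1$, and from $\sigma_M\le s_M\le1$ we conclude $\lim_{M\to\infty}s_M=1$. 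The main obstacle is the ball-measure estimate of the third paragraph; the rest is soft, and even that estimate is lighter than its real-variable analogue because the non-Archimedean geometry replaces delicate covering arguments by the clean dichotomy that balls are nested or disjoint.
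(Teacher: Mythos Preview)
Your argument is correct, but it follows a genuinely different route from the paper. The paper observes that $E_M$ is the attractor of the iterated function system $f_{ij}(x)=p^{i}/(x+j)$, $1\le i\le M$, $1\le j\le p-1$, each of which is a similarity of ratio $p^{-i}$ in the $p$-adic metric; it checks the open set condition with $p\mathbb{Z}_p$ and then cites the Moran--Hutchinson formula (Falconer, Theorem~9.3) to conclude that $s_M$ is \emph{exactly} the solution of $\sum_{j=1}^{p-1}\sum_{i=1}^{M}p^{-is}=1$, i.e.\ your $\sigma_M$, after which $\sigma_M\to1$ is stated as a simple computation. You instead build the Bernoulli measure $\nu_M$ directly and run the mass distribution principle, using the ultrametric nested-or-disjoint dichotomy to control $\nu_M(B)$ for arbitrary balls; this yields only the inequality $s_M\ge\sigma_M$, but that is all the lemma needs once combined with the trivial $s_M\le 1$. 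The trade-off is that the paper's proof is shorter but leans on transporting the Euclidean self-similar dimension theorem to the $p$-adic setting, while yours is self-contained and makes the role of the non-Archimedean geometry explicit. Your limit argument for $\sigma_M\to1$ is also spelled out more carefully than the paper's ``simple computation''; note that what you call ``monotone convergence'' is really the inequality $\sum_{a=1}^{M}(p-1)p^{-a\sigma_\infty}\le\sum_{a=1}^{M}(p-1)p^{-a\sigma_M}=1$ followed by $M\to\infty$, which is exactly what you use.
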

\begin{proof}
   Note that the set $E_M$ can be viewed as the attractor (also called self-similar set) of the iterated function system $\{f_{ij}(x)\}_{1\leq i\leq M,1\leq j\leq p-1}$ given by \[f_{ij}(x)=\frac{p^{i}}{x+j},\ x\in p\mathbb{Z}_p.\] For any $x,y\in p\mathbb{Z}_p$, the contracting ratio of $f_{ij}$ under the $p$-adic absolute value $|\cdot|_p$ is \[\left|\frac{f_{ij}(x)-f_{ij}(y)}{x-y}\right|_{p}=\left|\frac{p^i}{(x+j)(y+j)}\right|_{p}=\frac{|p^i|_p}{|x+j|_p\cdot|y+j|_p}=p^{-i}.\] On the other hand, the iterated function system $\{f_{ij}(x)\}_{1\leq i\leq M,1\leq j\leq p-1}$ satisfy the open set condition. In other words, there exists a bounded open set $p\mathbb{Z}_p$ such that \[p\mathbb{Z}_p\supseteq\bigcup\limits_{i=1}^{M}\bigcup\limits_{j=1}^{p-1}f_{ij}(p\mathbb{Z}_p)\] with the union disjoint. Together with the above facts, we conclude from \cite[Theorem 9.3]{Fal90} that the  number $s_M$ is the unique real solution of the self-similar equation \[\sum\limits_{j=1}^{p-1}\sum\limits_{i=1}^{M}(p^{-i})^{s_M}=1.\] The proof is completed by a simple computation.
\end{proof}

   The second lemma is due to Hu, Yu and Zhao \cite{HYZ18}, which concerns the Hausdorff dimension of the set defined by the arithmetic mean of the digits $\{a_n(x)\}_{n\geq1}$.
\begin{lemma}[{\cite[Lemma 4.5]{HYZ18}}]\label{pj}
   Let $\{a_n(x)\}_{n\geq1}$ be the digits in Schneider's $p$-adic continued fraction. Then \[\dim_{\rm H}\Big\{x\in p\mathbb{Z}_p:\ \lim\limits_{n\to\infty}\frac{a_1(x)+\cdots+a_n(x)}{n}=\infty\Big\}=0.\]
\end{lemma}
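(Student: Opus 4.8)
Write $S_n(x):=a_1(x)+\cdots+a_n(x)$ and let $A$ denote the set in question. The plan is to show $\mathcal{H}^s(A)=0$ for \emph{every} fixed $s>0$ by exhibiting an arbitrarily fine cover of $A$ by cylinders whose $s$-dimensional sum can be made to vanish; since $s>0$ is arbitrary this forces $\dim_{\rm H}A=0$. First I would fix $s>0$ and an auxiliary threshold $M>0$. If $x\in A$ then $S_n(x)/n\to\infty$, so in particular $S_n(x)\geq Mn$ for all large $n$, whence $x$ lies in $\bigcup_{n\geq N}G_n$ for every $N$, where $G_n:=\{x\in p\mathbb{Z}_p:\ S_n(x)\geq Mn\}$ is a union of cylinders of order $n$. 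By Proposition \ref{zjkh}, each order-$n$ cylinder with prescribed $(a_i,b_i)_{i\leq n}$ is a ball of diameter $p^{-(1+S_n)}$, which for $n\geq N$ is at most $p^{-(1+MN)}$; hence the cylinders making up $\bigcup_{n\geq N}G_n$ form a $p^{-(1+MN)}$-cover of $A$, and letting $N\to\infty$ gives covers of vanishing mesh.

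Next I would estimate the $s$-sum of this cover. Counting the $p-1$ admissible values of each $b_i$, the contribution of the order-$n$ cylinders inside $G_n$ is
\[
\Sigma_n:=\sum_{\substack{a_1,\ldots,a_n\geq1\\ S_n\geq Mn}}(p-1)^n\,p^{-s(1+S_n)}.
\]
The key manoeuvre is to split the weight as $p^{-sS_n}=p^{-sS_n/2}\cdot p^{-sS_n/2}$ and to use the constraint $S_n\geq Mn$ on the \emph{first} factor only, so that $p^{-sS_n}\leq p^{-sMn/2}\,p^{-sS_n/2}$ on $G_n$. Dropping the constraint from the remaining sum then decouples it into a product over the coordinates $a_1,\ldots,a_n$, yielding
\[
\Sigma_n\leq p^{-s}\Big(p^{-sM/2}(p-1)\sum_{a\geq1}p^{-sa/2}\Big)^{n}.
\]

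Finally I would observe that $\sum_{a\geq1}p^{-sa/2}=p^{-s/2}/(1-p^{-s/2})$ converges for every $s>0$, while the base $p^{-sM/2}(p-1)\sum_{a\geq1}p^{-sa/2}$ tends to $0$ as $M\to\infty$; choosing $M=M(s)$ large enough that this base is $<1$ makes $\sum_{n\geq1}\Sigma_n$ a convergent geometric series. Consequently $\mathcal{H}^s_{p^{-(1+MN)}}(A)\leq\sum_{n\geq N}\Sigma_n\to0$ as $N\to\infty$, so $\mathcal{H}^s(A)=0$, and since $s>0$ was arbitrary, $\dim_{\rm H}A=0$. The step requiring the most care is the sum estimate: the \emph{unconstrained} cylinder cover of $p\mathbb{Z}_p$ has base $(p-1)p^{-s}/(1-p^{-s})$, which equals $1$ at $s=1$ and exceeds $1$ for $s<1$, so by itself it cannot detect a small dimension. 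The growth hypothesis $S_n\geq Mn$ must be exploited, and the exponential split is precisely what converts that linear lower bound on $S_n$ into the gain $p^{-sMn/2}$ that can be driven below $1$ by taking $M$ large.
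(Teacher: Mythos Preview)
The paper does not give its own proof of this lemma; it is imported verbatim from \cite[Lemma~4.5]{HYZ18} and used as a black box. So there is no ``paper's proof'' to compare against here, and the relevant question is simply whether your argument is correct.

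It is. The containment $A\subseteq\bigcup_{n\geq N}G_n$ for every $N$ is justified as you indicate (for any $x\in A$ and any $N$ one can find $n\geq N$ with $S_n(x)\geq Mn$), and the cylinders composing $G_n$ indeed have diameter $p^{-(1+S_n)}\leq p^{-(1+MN)}$ by Proposition~\ref{zjkh}, so the covers have mesh tending to $0$ as $N\to\infty$. The $s$-sum estimate is the heart of the matter, and your ``split the exponent in half'' trick is exactly right: bounding $p^{-sS_n}\leq p^{-sMn/2}p^{-sS_n/2}$ on $G_n$ and then dropping the constraint decouples the sum into the $n$-th power of
\[
p^{-sM/2}(p-1)\sum_{a\geq1}p^{-sa/2}=p^{-sM/2}\cdot\frac{(p-1)p^{-s/2}}{1-p^{-s/2}},
\]
which is driven below $1$ by choosing $M$ large (for the given $s>0$). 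The tail $\sum_{n\geq N}\Sigma_n$ then tends to $0$, forcing $\mathcal H^s(A)=0$.

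Your closing remark is also well taken: the na{\"\i}ve cylinder cover has base $(p-1)p^{-s}/(1-p^{-s})$, equal to $1$ at $s=1$, so without exploiting the growth condition one cannot get below full dimension; the splitting is precisely what converts the hypothesis $S_n\geq Mn$ into a usable geometric gain. In short, this is a clean self-contained proof that the paper could have included in place of the citation.
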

   From Lemma \ref{pj}, we could obtain the Hausdorff dimension of Good's type set in Schneider's $p$-adic continued fraction directly.
\begin{lemma}\label{qyww}
   Let $a_n(x)$ be the $n$-th digit of Schneider's $p$-adic continued fraction. Then \[\dim_{\rm H}\Big\{x\in p\mathbb{Z}_p:\ a_n(x)\to\infty\ \text{as}\ n\to\infty\Big\}=0.\]
\end{lemma}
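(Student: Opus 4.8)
The plan is to exploit the elementary fact that Ces\`{a}ro averaging preserves divergence to infinity, and then invoke Lemma~\ref{pj}. First I would observe that if $x\in p\mathbb{Z}_p$ satisfies $a_n(x)\to\infty$ as $n\to\infty$, then for every $N\in\mathbb{N}$ there is some $n_0$ with $a_n(x)\geq 2N$ for all $n\geq n_0$; splitting the sum $a_1(x)+\cdots+a_n(x)$ at the index $n_0$ and discarding the bounded initial block, one gets
\[\frac{a_1(x)+\cdots+a_n(x)}{n}\geq\frac{(n-n_0)\cdot 2N}{n}\geq N\]
for all sufficiently large $n$. Since $N$ is arbitrary, this shows that $\frac{a_1(x)+\cdots+a_n(x)}{n}\to\infty$ as $n\to\infty$, and hence we obtain the set inclusion
\[\Big\{x\in p\mathbb{Z}_p:\ a_n(x)\to\infty\ \text{as}\ n\to\infty\Big\}\subseteq\Big\{x\in p\mathbb{Z}_p:\ \lim\limits_{n\to\infty}\frac{a_1(x)+\cdots+a_n(x)}{n}=\infty\Big\}.\]

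Then I would simply apply the monotonicity of Hausdorff dimension with respect to set inclusion, together with Lemma~\ref{pj}, to conclude that the dimension of the left-hand set is at most $0$; since Hausdorff dimension is always non-negative, it equals $0$. There is essentially no obstacle here: all the analytic substance is already packaged into Lemma~\ref{pj} (due to Hu, Yu and Zhao), and the only point that needs checking is the Ces\`{a}ro-type implication displayed above, which is completely routine. One could alternatively mimic the covering argument behind Lemma~\ref{pj} directly, but routing through the arithmetic-mean set is the cleanest path and keeps the proof to a couple of lines.
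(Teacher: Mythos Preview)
Your proposal is correct and matches the paper's approach exactly: the paper simply states that the result follows directly from Lemma~\ref{pj}, and your Ces\`{a}ro-averaging argument is precisely the routine set inclusion that justifies this. You have spelled out in detail what the paper leaves implicit.
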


\section{Proofs of main results}
   In this section, the proofs of main results are divided into three parts. We first prove Theorem \ref{zdl} and Corollary \ref{lsc}, and then prove Theorems \ref{liminf} and \ref{cv}, and at last present the proof of Theorem \ref{liminfsup}.
\subsection{Proofs of Theorem \ref{zdl} and Corollary \ref{lsc}}
   Recall that \[E_{\sup}(\psi)=\left\{x\in p\mathbb{Z}_p:\ \limsup\limits_{n\to\infty}\frac{a_n(x)}{\psi(n)}=1\right\}.\] We divide the proof of Theorem \ref{zdl} into three cases.
\subsubsection{\textbf{The case} $\psi(n)/n\to0$ as $n\to\infty$.}
   In this case, our strategy is to construct a suitable Cantor subset $E_{M}(\psi)$ of $E_{\sup}(\psi)$ and then establish a connection between $E_{M}(\psi)$ and $E_M$ by means of a function satisfying a H\"{o}lder condition. Consider $\{m_k\}_{k\geq 1}$ with $m_k = 2^k$. For any $M\geq2$, define
\begin{eqnarray}\label{emp}
  \nonumber E_M(\psi)=\Big\{x\in p\mathbb{Z}_p:\ a_{m_k}(x)=\lfloor\psi(m_k)\rfloor+1\ \text{for all} \ \ k\geq 1 \
   \text{and} \\
   1\leq a_i(x)\leq M\ \text{for}\ i\neq m_k\ \text{for any}\ k\geq1\Big\}.
\end{eqnarray}
   Here and in the sequel, the notation $\lfloor\cdot\rfloor$ means the integer part of a real number. From the definition of $E_M(\psi)$, we obtain the following lemma directly.
\begin{lemma}\label{phibh}
   For any $M\geq2$, $E_M(\psi)\subseteq E_{\sup}(\psi)$.
\end{lemma}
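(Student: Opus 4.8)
The plan is to verify the inclusion pointwise: I would fix an arbitrary $x\in E_M(\psi)$ and show directly that $\limsup_{n\to\infty} a_n(x)/\psi(n)=1$, by proving the two inequalities ``$\geq 1$'' and ``$\leq 1$'' separately. Note first that $E_M(\psi)$ is automatically a subset of $p\mathbb{Z}_p$, since every admissible digit sequence $(a_n,b_n)\in\mathbb{N}\times\{1,\ldots,p-1\}$ is realized by some element of $p\mathbb{Z}_p$, so there is nothing to check about membership in $p\mathbb{Z}_p$.

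For the lower bound I would restrict attention to the sparse subsequence $n=m_k=2^k$. By the definition of $E_M(\psi)$ we have $a_{m_k}(x)=\lfloor\psi(m_k)\rfloor+1$, hence
\[
1<\frac{a_{m_k}(x)}{\psi(m_k)}=\frac{\lfloor\psi(m_k)\rfloor+1}{\psi(m_k)}\leq 1+\frac{1}{\psi(m_k)}.
\]
Since $\psi(n)\to\infty$ as $n\to\infty$ and $m_k\to\infty$, we get $\psi(m_k)\to\infty$, so the right-hand side tends to $1$; therefore $a_{m_k}(x)/\psi(m_k)\to 1$, and in particular $\limsup_{n\to\infty} a_n(x)/\psi(n)\geq 1$.

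For the upper bound I would split the index set $\mathbb{N}$ into $\{m_k:k\geq1\}$ and its complement. For $n\neq m_k$ the constraint $1\leq a_n(x)\leq M$ gives $a_n(x)/\psi(n)\leq M/\psi(n)\to 0$ as $n\to\infty$, while along $n=m_k$ the ratio tends to $1$ by the previous step. Consequently, for every $\varepsilon>0$ all but finitely many $n$ satisfy $a_n(x)/\psi(n)\leq 1+\varepsilon$, which forces $\limsup_{n\to\infty} a_n(x)/\psi(n)\leq 1$. Combining the two bounds gives $\limsup_{n\to\infty} a_n(x)/\psi(n)=1$, i.e.\ $x\in E_{\sup}(\psi)$.

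I do not expect any genuine obstacle here: once the defining constraints of $E_M(\psi)$ are unwound, this is a routine computation. The only points deserving a moment's care are that $\psi(m_k)\to\infty$ (so the floor correction $\lfloor\psi(m_k)\rfloor+1$ is asymptotically negligible relative to $\psi(m_k)$) and that the digits off the sequence $\{m_k\}$ are uniformly bounded by $M$ while $\psi(n)\to\infty$, so those indices contribute nothing to the upper limit.
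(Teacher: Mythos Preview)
Your proof is correct and is exactly the routine verification the paper has in mind; the paper itself gives no argument beyond the remark that the lemma follows directly from the definition of $E_M(\psi)$, and you have simply written out that direct check in full detail.
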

   Next, we shall use the following result to estimate the Hausdorff dimension of $E_M(\psi)$.
\begin{lemma}[{\cite[Proposition 2.3]{Fal90}}]\label{Fal-Lip}
   Let $E\subseteq p\mathbb{Z}_p$ and suppose that $f:E\to p\mathbb{Z}_p$ satisfies a H{\"o}lder condition \[|f(x)-f(x)|_p\le c|x-y|_p^{\alpha}\quad(x,y\in E)\] for constants $c>0$ and $\alpha>0$. Then $\dim_{\rm H}f(E)\le 1/\alpha \dim_{\rm H} E$.
\end{lemma}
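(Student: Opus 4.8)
The plan is to run the classical Hausdorff-measure-under-Hölder-maps argument, which transfers verbatim to the ultrametric setting $p\mathbb{Z}_p$ because the notions of diameter, $\delta$-cover and $\mathcal H^s$ used here are defined exactly as in the Euclidean case. First I would fix any real number $s>\dim_{\rm H}E$, so that by definition $\mathcal H^s(E)=0$, and let $t=s/\alpha$. The goal is to show $\mathcal H^{t}(f(E))=0$, which will give $\dim_{\rm H}f(E)\le t=s/\alpha$; letting $s\downarrow\dim_{\rm H}E$ then yields the claimed bound $\dim_{\rm H}f(E)\le\frac1\alpha\dim_{\rm H}E$.

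For the core estimate, fix $\delta>0$ and choose a $\delta$-cover $\{U_i\}_{i\ge1}$ of $E$ with $\sum_{i\ge1}\big({\rm diam}(U_i)\big)^s<\varepsilon$, which is possible since $\mathcal H^s_\delta(E)\le\mathcal H^s(E)=0$. Replacing each $U_i$ by $U_i\cap E$ if necessary, I may assume $U_i\subseteq E$, so that $f(U_i)$ is defined and $\{f(U_i)\}_{i\ge1}$ covers $f(E)$. The Hölder hypothesis gives ${\rm diam}\big(f(U_i)\big)\le c\,\big({\rm diam}(U_i)\big)^{\alpha}\le c\,\delta^{\alpha}$ for every $i$, so $\{f(U_i)\}_{i\ge1}$ is a $(c\delta^{\alpha})$-cover of $f(E)$; moreover
\[
\sum_{i\ge1}\big({\rm diam}(f(U_i))\big)^{t}\le\sum_{i\ge1}\Big(c\,\big({\rm diam}(U_i)\big)^{\alpha}\Big)^{s/\alpha}=c^{s/\alpha}\sum_{i\ge1}\big({\rm diam}(U_i)\big)^{s}<c^{s/\alpha}\varepsilon.
\]
Hence $\mathcal H^{t}_{c\delta^{\alpha}}(f(E))\le c^{s/\alpha}\varepsilon$. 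Since $\varepsilon>0$ is arbitrary, $\mathcal H^{t}_{c\delta^{\alpha}}(f(E))=0$, and letting $\delta\to0$ (so $c\delta^{\alpha}\to0$) gives $\mathcal H^{t}(f(E))=0$, as required.

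I do not expect any genuine obstacle here: the statement is exactly \cite[Proposition~2.3]{Fal90} and the only point worth a remark is that in $p\mathbb{Z}_p$ every cover can be refined to a cover by cylinders (balls), by Proposition~\ref{zjkh}, so there is no loss in working with arbitrary covers as above. The one bookkeeping subtlety is the harmless replacement $U_i\mapsto U_i\cap E$ to make $f(U_i)$ meaningful, together with the trivial monotonicity ${\rm diam}(f(U_i\cap E))\le{\rm diam}(f(U_i))$ that this uses; everything else is the displayed chain of inequalities.
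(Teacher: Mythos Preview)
Your argument is correct and is precisely the standard proof of \cite[Proposition~2.3]{Fal90}; note that the paper does not supply its own proof of this lemma but merely quotes it from Falconer, so there is nothing to compare against beyond observing that your write-up matches the cited source. The only cosmetic point is that your parenthetical ``trivial monotonicity ${\rm diam}(f(U_i\cap E))\le{\rm diam}(f(U_i))$'' is slightly garbled since $f(U_i)$ is undefined before the replacement; what you actually use (and what suffices) is ${\rm diam}(U_i\cap E)\le{\rm diam}(U_i)$, so the $s$-sum over the intersected cover is still below $\varepsilon$.
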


\begin{lemma}\label{phimw} 
   Let $E_M(\psi)$ be defined as above. Then we have \[\dim_{\rm H}E_M(\psi)\ge\dim_{\rm H}E_M.\]
\end{lemma}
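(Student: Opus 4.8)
The plan is to push the dimension of $E_M$ up to $E_M(\psi)$ by means of an explicit surjection $f\colon E_M(\psi)\to E_M$ that simply forgets the prescribed large digits, and to control its distortion through the ball structure of cylinders recorded in Proposition \ref{zjkh}. Given $x\in E_M(\psi)$ with digit pairs $\big(a_i(x),b_i(x)\big)_{i\ge1}$, define $f(x)\in p\mathbb{Z}_p$ to be the point whose Schneider expansion is obtained by deleting the pairs occupying the positions $m_k=2^k$ and closing up the gaps; since every surviving $a_i(x)$ lies in $\{1,\dots,M\}$, we have $f(x)\in E_M$. Conversely, inserting the forced pairs $\big(\lfloor\psi(m_k)\rfloor+1,\,1\big)$ at the positions $m_k$ into the expansion of an arbitrary $y\in E_M$ produces a point of $E_M(\psi)$ that $f$ sends to $y$, so $f$ is onto $E_M$ up to at most the countable set of rationals with terminating expansion, which is negligible for Hausdorff dimension. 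In view of Lemma \ref{Fal-Lip}, it therefore suffices to prove that for every $\alpha\in(0,1)$ the map $f$ satisfies a H\"older condition of exponent $\alpha$ on $E_M(\psi)$: then $\dim_{\rm H}E_M=\dim_{\rm H}f(E_M(\psi))\le\alpha^{-1}\dim_{\rm H}E_M(\psi)$, and letting $\alpha\to1^-$ gives $\dim_{\rm H}E_M(\psi)\ge\dim_{\rm H}E_M$.

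For the H\"older bound, fix distinct $x,x'\in E_M(\psi)$, let $n$ be the largest integer for which $x$ and $x'$ lie in a common cylinder of order $n$, and set $S_n(x)=a_1(x)+\cdots+a_n(x)$. Splitting that cylinder into its order-$(n+1)$ subcylinders and using Proposition \ref{zjkh} yields $|x-x'|_p=p^{-(S_n(x)+\delta)}$ with $\delta=\min\{a_{n+1}(x),a_{n+1}(x')\}\ge1$; in particular $S_n(x)+\delta\ge n+1$. On the other hand $f(x)$ and $f(x')$ agree on their first $n-\#\{k:m_k\le n\}$ digit pairs --- precisely the pairs of $x$ lying away from the positions $m_k$ --- so Proposition \ref{zjkh} gives
\[
|f(x)-f(x')|_p\ \le\ p^{-\left(1+S_n(x)-\Sigma_n\right)},\qquad\text{where}\quad \Sigma_m:=\sum_{k:\,m_k\le m}\big(\lfloor\psi(m_k)\rfloor+1\big).
\]
Dividing, one checks that $|f(x)-f(x')|_p/|x-x'|_p\le p^{\,M+\Sigma_{n+1}}$, so the desired inequality $|f(x)-f(x')|_p\le c\,|x-x'|_p^{\alpha}$, equivalently $p^{M+\Sigma_{n+1}}\le c\,p^{(1-\alpha)(S_n(x)+\delta)}$, follows once one shows $\Sigma_{n+1}\le(1-\alpha)(n+1)+O_{M,\alpha,\psi}(1)$.

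This last estimate is the crux, and it is exactly where the sparsity $m_k=2^k$ and the hypothesis $\psi(n)/n\to0$ are both used. There are only $O(\log n)$ indices $k$ with $2^k\le n+1$; and given $\alpha$ one may choose $\varepsilon>0$ with $4\varepsilon<1-\alpha$ and then $N_\varepsilon$ with $\psi(m)\le\varepsilon m$ for all $m\ge N_\varepsilon$, whence $\sum_{2^k\le n+1}\psi(2^k)\le C_\varepsilon+\varepsilon\sum_{2^k\le n+1}2^k\le C_\varepsilon+2\varepsilon(n+1)$. Combining, $\Sigma_{n+1}\le C_\varepsilon+2\varepsilon(n+1)+\log_2(n+1)+O(1)$, which is at most $(1-\alpha)(n+1)+O_{M,\alpha,\psi}(1)$ for all large $n$, the finitely many remaining $n$ being absorbed into the constant $c$ (there $|x-x'|_p$ is bounded below and the ratio above is bounded by a fixed power of $p$). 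I expect this bookkeeping --- keeping the accumulated cost $\sum_{2^k\le n}\psi(2^k)$ of the inserted digits sublinear in $n$, which would break down for a substantially denser choice of the $m_k$ --- to be the only genuine difficulty; everything else is the routine cylinder geometry developed in the preliminaries.
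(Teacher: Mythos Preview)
Your proof is correct and follows essentially the same approach as the paper: both define the digit-deletion surjection $f\colon E_M(\psi)\to E_M$, use the cylinder-ball description of Proposition \ref{zjkh} to bound $|f(x)-f(x')|_p$ above and $|x-x'|_p$ below, and then exploit $m_k=2^k$ together with $\psi(n)/n\to0$ to show that the accumulated cost of the inserted digits is $o(n)$, yielding a H\"older exponent arbitrarily close to $1$. The only cosmetic differences are that the paper indexes by the first position of disagreement rather than the last of agreement, uses the cruder lower bound $|x-y|_p\ge p^{-(1+S_n(x))}$ instead of your exact value $p^{-(S_n+\delta)}$, and splits the cases $n=m_k$ versus $n>m_k$ explicitly rather than absorbing them into your $\Sigma_{n+1}$.
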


\begin{proof}
   It suffices to prove that for any $\varepsilon>0$, \[\dim_{\rm H}E_M(\psi)\ge\frac{1}{1+\varepsilon}\dim_{\rm H}E_M.\] Recall that \[m_k=2^k,\quad \lim\limits_{n\to\infty}\psi(n)/n=0.\] Thus there exists an integer $k_0\ge1$ such that
\begin{equation}\label{ge1+epsilon}
   1-\frac{\lfloor\psi(m_1)\rfloor+\cdots+\lfloor\psi(m_k)\rfloor+M+k+\lfloor\psi(m_k)\rfloor}{m_k}\ge \frac{1}{1+\varepsilon}
\end{equation}
   for all $k\ge k_0$. For any $x\in E_M(\psi)$, we define a map $f: E_M(\psi)\to E_M$ by \[f(x)=[a_1(x),b_1(x);\ldots;a_{m_i-1}(x),b_{m_i-1}(x);a_{m_i+1}(x),b_{m_i+1}(x);\ldots].\] That is, if we delete  all the digits $a_{m_i}(x),b_{m_i}(x)$ from the $p$-adic continued fraction of $x$, then we get  the $p$-adic continued fraction digits of $f(x)$. Thus, the map $f$ is surjective. Let \[n_0=m_{k_0}\ \text{and}\ \delta=\min{\rm diam}(I_{n_0}(a_1,b_1;\ldots;a_{n_0},b_{n_0})),\] where the minimum is taken over all $(a_1,b_1;\ldots;a_{n_0},b_{n_0})$ satisfying $1\le a_i\le M$ for $i\not\in\{m_k\}_{k\geq1}$, $ a_{i}=\lfloor\psi(i)\rfloor+1$ for $i\in \{m_k\}_{k\geq1}$ and $ b_i\in \{1,\ldots,p-1\}$ for all $1\le i\le n_0$. Then for any $x,y\in E_M(\psi)$ with $|x-y|_p<\delta$, we have $a_i(x)=a_i(y)$ and $b_i(x)=b_i(y)$ for all $1\le i\le n_0$. Let $n>n_0$ be the smallest integer such that
\begin{equation}\label{df-of-n}
  a_n(x)\ne a_n(y)\ \text{and}\ b_n(x)\ne b_n(y).
\end{equation}
   Then there exists a unique integer $k\ge1$ such that
\begin{equation}\label{n-and-mk}
   m_k\le n <m_{k+1}.
\end{equation}
   Note that $a_i(x)=a_i(y)$ and $b_i(x)=b_i(y)$ for all $1\le i\le n-1$, then we have \[f(x),f(y)\in I_{l}(a_1(x),b_1(x);\ldots;a_{m_i-1}(x),b_{m_i-1}(x);a_{m_i+1}(x),b_{m_i+1}(x);\ldots;a_{n-1}(x),b_{n-1}(x)),\] where $ l=n-1-k$ if $ n>m_k$ and $ l=n-k$ if $ n=m_k$. By Proposition \ref{zjkh}, it follows that
\begin{equation}\label{f-ge}
   |f(x)-f(y)|_p\le p^{-1-\big(\sum\limits_{i=1}^{ n-1}a_i(x)-\sum\limits_{i=1}^{k}a_{m_i}(x)\big)}.
\end{equation}
   On the other hand, the two cylinders \[I_{n}(a_1(x),b_1(x);\ldots;a_n(x),b_n(x)),\quad I_{n}(a_1(y),b_1(y);\ldots;a_n(y),b_n(y))\] are disjoint from \eqref{df-of-n}. By Proposition \ref{zjkh}, it follows that
\begin{equation}\label{x-y-le}
   |x-y|_p\geq p^{-\big(1+\sum\limits_{i=1}^{ n}a_i(x)\big)}.
\end{equation}
   By \eqref{n-and-mk} and the definition of $E_M(\psi)$, we have $a_{n}(x)\le M+\lfloor \psi(m_k)\rfloor$ and
\[
\begin{aligned}
  \frac{1+\sum\limits_{i=1}^{ n-1}a_i(x)-\sum\limits_{i=1}^{k}a_{m_i}(x)}{1+\sum\limits_{i=1}^{ n}a_i(x)}
  &=1-\frac{a_n(x)+\sum\limits_{i=1}^{k}(1+\lfloor\psi(m_i)\rfloor)}{1+\sum\limits_{i=1}^{ n}a_i(x)}\\
&\ge 1-\frac{M+k+\lfloor\psi(m_k)\rfloor+\sum\limits_{i=1}^{k}\lfloor\psi(m_i)\rfloor}{m_k}\\
&\ge \frac{1}{1+\varepsilon},
\end{aligned}
\]
   where the last inequality follows by \eqref{ge1+epsilon}. We deduce from \eqref{f-ge} and \eqref{x-y-le} that \[|f(x)-f(y)|_p\le |x-y|_p^{1/(1+\varepsilon)}\] holds for any $x,y\in E_M(\psi)$ with $|x-y|_p<\delta$. Let $c=\max\{1,\delta^{-1/(1+\varepsilon)}\}$. Then \[|f(x)-f(y)|_p\le c|x-y|_p^{1/(1+\varepsilon)}\] holds for any $x,y\in E_M(\psi)$. By Lemma \ref{Fal-Lip}, the conclusion follows.
\end{proof}
   Thus, the conclusion of Theorem \ref{zdl} follows in this case by Lemmas \ref{phibh}, \ref{phimw} and \ref{yj}.

\subsubsection{\textbf{The case} $\psi(n)/n\to\alpha\ (0<\alpha<\infty)$ as $n\to\infty$.}
   In this case, it is easy to see that  the set $E_{\sup}(\psi)$ equals to \[F(\alpha):=\left\{x\in p\mathbb{Z}_p:\ \limsup\limits_{n\to\infty}\frac{a_n(x)}{n}=\alpha\right\}.\] Hence, in order to get $\dim_{\rm H}E_{\sup}(\psi)$, it is equivalent to calculating $\dim_{\rm H}F(\alpha)$.\\
\textbf{Upper bound:} The strategy here is to construct a larger suitable set containing $F(\alpha)$ by using its definition. For any $0<\varepsilon<\alpha$, let
\begin{eqnarray*}
\mathcal{C}_n=\big\{(a_1,b_1;\ldots;a_n,b_n):\ (a_k,b_k)\in\mathbb{N}\times\{1,\ldots,p-1\}\ \text{for any}\ 1\leq k\leq n-1,\\
\text{and}\  a_n\geq n(\alpha-\varepsilon), 1\leq b_n\leq p-1\big\}.
\end{eqnarray*}
   For any $x\in F(\alpha)$, then it follows from the definition of upper limit that there are infinitely many $n$'s such that $a_n(x)\geq n(\alpha-\varepsilon)$. Thus, we have
\begin{align}\label{xs}
\nonumber F(\alpha)&\subseteq\Big\{x\in p\mathbb{Z}_p:\ a_n(x)\geq n(\alpha-\varepsilon)\ \text{for infinitely many}\ n\in\mathbb{N}\Big\}\\
\nonumber&=\bigcap_{N\geq1}\bigcup_{n\geq N}\big\{x\in p\mathbb{Z}_p:\ a_n(x)\geq n(\alpha-\varepsilon)\big\}\\
&=\bigcap_{N\geq1}\bigcup_{n\geq N}\bigcup_{(a_1,b_1;\cdots;a_n,b_n)\in\mathcal{C}_n}I_n(a_1,b_1;\ldots;a_n,b_n).
\end{align}
   Note that for any $t>s(\alpha-\varepsilon)$, we deduce from \eqref{fc} and Remark \ref{slx} that
\begin{equation}\label{bds}
\sum\limits_{k\geq1}(p-1)p^{-(k+\alpha-\varepsilon)t}<1.
\end{equation}
   It follows from \eqref{xs}, \eqref{bds} and Proposition \ref{zjkh} that
\begin{align*}
\mathcal{H}^{t}(F(\alpha))&\leq\liminf\limits_{N\to\infty}\sum\limits_{n\geq N}\sum\limits_{(a_1,b_1;\ldots;a_n,b_n)\in\mathcal{C}_n}\big({\rm diam}(I_n(a_1,b_1;\ldots;a_n,b_n))\big)^t\\
&\leq\liminf\limits_{N\to\infty}\sum\limits_{n\geq N}\sum\limits_{(a_1,b_1;\ldots;a_n,b_n)\in\mathcal{C}_n}
p^{-(1+a_1+\cdots+a_{n-1}+a_n)t}\\
&\leq\liminf\limits_{N\to\infty}\sum\limits_{n\geq N}(p-1)^{n}\cdot\sum\limits_{a_1,\ldots,a_{n-1}\geq1}
p^{-\big(a_1+\cdots+a_{n-1}+(n-1)(\alpha-\varepsilon)\big)t}\\
&=(p-1)\cdot\liminf\limits_{N\to\infty}\sum\limits_{n\geq N}\left(\sum\limits_{k\geq1}(p-1)p^{-(k+\alpha-\varepsilon)t}\right)^{n-1}=0.
\end{align*}
   This shows that $\dim_{\rm H}F(\alpha)\leq t$ for any $t>s(\alpha-\varepsilon)$. We deduce from Remark \ref{slx} that \[\dim_{\rm H}F(\alpha)\leq\lim\limits_{\varepsilon\to0^+}s(\alpha-\varepsilon)=s(\alpha).\]
\textbf{Lower bound:} To bound $\dim_{\rm H}F(\alpha)$ from below, we shall construct a suitable Cantor subset of $F(\alpha)$. Here we follow the constructions used in \cite[Lemma 4.7]{HYZ18}. Let $\{n_k\}_{k\geq0}$ be a sequence of positive integers satisfying
$n_0=1$ and $n_{k+1}\geq(k+2)n_k,\ (k\geq0)$. For any positive integer $M\geq2$, let
\begin{eqnarray}\label{zzj}
  \nonumber F(n_k,\alpha,M)=\Big\{x\in p\mathbb{Z}_p:\ a_{n_k}(x)=\lfloor\alpha n_k\rfloor+1\ \text{for all} \ \ k\geq 0 \
   \text{and} \\
   1\leq a_i(x)\leq M\ \text{for}\ i\neq n_k\ \text{for any}\ k\geq0\Big\}.
\end{eqnarray}
   Then it is clear that \[F(\alpha)\supseteq F(n_k,\alpha,M).\] From the result in \cite[Lemma 4.7]{HYZ18}, we know that $\dim_{\rm H}F(n_k,\alpha,M)$ is not less than the unique real solution of the equation \[\sum\limits_{1\leq k\leq M}(p-1)p^{-(k+\alpha)s}=1.\] By letting $M\to\infty$, we compare the above equation  with \eqref{fc} and conclude that
\begin{equation}\label{ess}
   \dim_{\rm H}F(\alpha)\geq\dim_{\rm H}F(n_k,\alpha,M)\geq s(\alpha).
\end{equation}

\subsubsection{\textbf{The case} $\psi(n)/n\to\infty$ as $n\to\infty$.}
   In this case, for any $\alpha>0$, \[E_{\sup}(\psi)\subseteq\Big\{x\in p\mathbb{Z}_p:\ a_n(x)\geq n\alpha \ \text{for infinitely many}\ n\in\mathbb{N}\Big\}.\] In view of the process of calculating $\dim_{\rm H}F(\alpha)$, we could obtain $\dim_{\rm H}E_{\sup}(\psi)\leq s(\alpha)$. By letting $\alpha\to\infty$, we deduce from Remark \ref{slx} that \[\dim_{\rm H}E_{\sup}(\psi)\leq\lim\limits_{\alpha\to\infty}s(\alpha)=0.\]
   
\textbf{Proof of Corollary \ref{lsc}:} It is worth pointing out that  the set $F(n_k,\alpha,M)$ in \eqref{zzj} satisfies
\[F(n_k,\alpha,M) \subseteq \Big\{x \in p\mathbb{Z}_p:\ \limsup\limits_{n\to\infty}a_n(x)=\infty\Big\}.\] Then it follows that
\[\dim_{\rm H}\Big\{x \in p\mathbb{Z}_p:\ \limsup\limits_{n\to\infty}a_n(x)=\infty\Big\}\geq s(\alpha).\]
By letting $\alpha\to0$, we obtain the desired result from Remark \ref{slx}.

\subsection{Proofs of Theorems \ref{liminf} and \ref{cv}}
   By the definitions of limit and lower limit, we could show that
\begin{align*}
E_{\inf}(\psi),\ E(\psi)&\subseteq\Big\{x\in p\mathbb{Z}_p:\ a_n(x)\geq\frac{\psi(n)}{2}\ \ \text{for all}\ n\in\mathbb{N}\ \text{large enough}\Big\}\\
&\subseteq\Big\{x\in p\mathbb{Z}_p:\ a_n(x)\to\infty\ \text{as}\ n\to\infty\Big\}.
\end{align*}
   It follows from Lemma \ref{qyww} that \[\dim_{\rm H}E_{\inf}(\psi)=\dim_{\rm H}E(\psi)=0.\] Now it turns to present the proof of Theorem \ref{cv}. Recall that \[T(\alpha)=\big\{x\in p\mathbb{Z}_p:\ \tau(x)=\alpha\big\},\ (0\leq\alpha\leq\infty).\] Applying Birkhoff Ergodic Theorem to the digits $\{a_n(x)\}_{n\geq1}$ in dynamical system $(p\mathbb{Z}_p,T_p,\mu_p)$, we deduce from Proposition \ref{dg} that for $t>0$ and $\mu_p$-almost all $x \in p\mathbb{Z}_p$,
\begin{align*}
\lim\limits_{n\to\infty}\frac{a^{-t}_1(x)+\cdots+a^{-t}_n(x)}{n}
&=\lim\limits_{n\to\infty}\frac{a^{-t}_1(x)+\cdots+a^{-t}_1(T^{n-1}_p(x))}{n}\\
&=\int_{p\mathbb{Z}_p}a^{-t}_1(x)d\mu_p(x)=\sum\limits_{k\geq1}k^{-t}\cdot\mu_p\{x\in p\mathbb{Z}_p:\ a_1(x)=k\}\\
&=(p-1)\sum\limits_{k\geq1}k^{-t}\cdot p^{-k}<\infty.
\end{align*}
   This shows that $\tau(x)=\infty$ for $\mu_p$-almost all $x \in p\mathbb{Z}_p$, and thus $\dim_{\rm H}T(\infty)=1$. For the case $0\leq\alpha<\infty$, we deduce from the definition of $\tau(x)$ that for any $\varepsilon>0$, \[\sum_{n\geq1}a^{-(\alpha+\varepsilon)}_n(x)<\infty\ \text{and thus}\ a_n(x)\to\infty\ \text{as}\ n\to\infty.\] This implies that \[T(\alpha)\subseteq\Big\{x\in p\mathbb{Z}_p:\ a_n(x)\to\infty\ \text{as}\ n\to\infty\Big\}.\] By Lemma \ref{qyww}, we have \[\dim_{\rm H}T(\alpha)=0.\]
   
\subsection{Proof of Theorem \ref{liminfsup}}
   Recall that \[E^{\sup}_{\inf}(\psi,\alpha_1,\alpha_2)=\left\{x\in p\mathbb{Z}_p:\liminf_{n\rightarrow\infty}\dfrac{a_n(x)}{\psi(n)}=
\alpha_1,~\limsup_{n\rightarrow\infty}\dfrac{a_n(x)}{\psi(n)}=\alpha_2\right\}.\]
\textbf{For the case $0<\alpha_1<\alpha_2\leq\infty$}, it is easy to see that
\begin{align*}
E^{\sup}_{\inf}(\psi,\alpha_1,\alpha_2)&\subseteq\Big\{x\in p\mathbb{Z}_p:\ \liminf_{n\rightarrow\infty}\dfrac{a_n(x)}{\psi(n)}=
\alpha_1\Big\}\\
&\subseteq\Big\{x\in p\mathbb{Z}_p:\ a_n(x)\to\infty\ \text{as}\ n\to\infty\Big\}.
\end{align*}
   By Lemma \ref{qyww}, we have \[\dim_{\rm H}E^{\sup}_{\inf}(\psi,\alpha_1,\alpha_2)=0.\]
\textbf{For the case $0=\alpha_1<\alpha_2<\infty$}, we divide the proof into three cases.
\begin{enumerate}[(a)]
\item If $\psi(n)/n\to0$ as $n\to\infty$, from the set $E_{M}(\psi)$ constructed in \eqref{emp}, we have
\[E_{M}(\psi)\subseteq E^{\sup}_{\inf}(\psi,\alpha_1,\alpha_2).\]
By letting $M\to\infty$, we deduce from Lemmas \ref{phimw} and \ref{yj} that
\[\dim_{\rm H}E^{\sup}_{\inf}(\psi,\alpha_1,\alpha_2)=1.\]
\item If $\psi(n)/n\to\alpha\ (0<\alpha<\infty)$ as $n\to\infty$, then it is clear that
\[E^{\sup}_{\inf}(\psi,\alpha_1,\alpha_2)\subseteq\left\{x\in p\mathbb{Z}_p:\ \limsup\limits_{n\to\infty}\frac{a_n(x)}{n}=\alpha\alpha_2\right\}:=F(\alpha\alpha_2).\]
   Using the same method in the proof of the second part of Theorem \ref{zdl}, we can obtain $\dim_{\rm H}E^{\sup}_{\inf}(\psi,\alpha_1,\alpha_2)$. In fact, for the upper bound of $\dim_{\rm H}E^{\sup}_{\inf}(\psi,\alpha_1,\alpha_2)$, it follows from the definition of upper limit that for any $0<\varepsilon<\alpha\alpha_2$, \[F(\alpha\alpha_2)\subseteq\Big\{x\in p\mathbb{Z}_p:\ a_n(x)\geq n(\alpha\alpha_2-\varepsilon)\ \text{for infinitely many}\ n\in\mathbb{N}\Big\}.\] Then from the estimation of the upper bound of $\dim_{\rm H}F(\alpha)$, we have \[\dim_{\rm H}E^{\sup}_{\inf}(\psi,\alpha_1,\alpha_2)\leq\dim_{\rm H}F(\alpha\alpha_2)\leq\lim\limits_{\varepsilon\to0^+}s(\alpha\alpha_2-\varepsilon)=s(\alpha\alpha_2).\] In the following we shall estimate the lower bound of $\dim_{\rm H}E^{\sup}_{\inf}(\psi,\alpha_1,\alpha_2)$. Let
\begin{eqnarray*}
F(n_k,\alpha\alpha_2,M)=\Big\{x\in p\mathbb{Z}_p:\ a_{n_k}(x)=\lfloor\alpha\alpha_2 n_k\rfloor+1\ \text{for all} \ \ k\geq 0 \
   \text{and} \\
   1\leq a_i(x)\leq M\ \text{for}\ i\neq n_k\ \text{for any}\ k\geq0\Big\}.
\end{eqnarray*}
   Here the set $F(n_k,\alpha\alpha_2,M)$ comes from the set $F(n_k,\alpha,M)$ marked in \eqref{zzj} by changing $\alpha$ to $\alpha\alpha_2$. Then it is easy to verify that \[E^{\sup}_{\inf}(\psi,\alpha_1,\alpha_2)\supseteq F(n_k,\alpha\alpha_2,M).\] By letting $M\to\infty$, we deduce from \eqref{ess} that \[\dim_{\rm H}E^{\sup}_{\inf}(\psi,\alpha_1,\alpha_2)\geq\dim_{\rm H}F(n_k,\alpha\alpha_2,M)\geq s(\alpha\alpha_2).\]
\item If $\psi(n)/n\to\infty$ as $n\to\infty$, then it follows that
\begin{align*}
E^{\sup}_{\inf}(\psi,\alpha_1,\alpha_2)&\subseteq\Big\{x\in p\mathbb{Z}_p:\ \limsup_{n\rightarrow\infty}\dfrac{a_n(x)}{\psi(n)}=
\alpha_2\Big\}\\
&\subseteq\Big\{x\in p\mathbb{Z}_p:\ \limsup_{n\rightarrow\infty}\dfrac{a_n(x)}{n}=
\infty\Big\}.
\end{align*}
   This means that for any $\alpha>0$, \[E^{\sup}_{\inf}(\psi,\alpha_1,\alpha_2)\subseteq\Big\{x\in p\mathbb{Z}_p:\ a_n(x)\geq n\alpha\ \text{for infinitely many}\ n\in\mathbb{N}\Big\}.\] By letting $\alpha\to\infty$, we conclude from Remark \ref{slx} that \[\dim_{\rm H}E^{\sup}_{\inf}(\psi,\alpha_1,\alpha_2)\leq\lim\limits_{\alpha\to\infty}s(\alpha)=0.\]
\end{enumerate}
{\bf Acknowledgement:} The authors would like to thank Professor Lingmin Liao for his invaluable comments in an early version of the paper. The research is supported by Natural Science Foundation of Jiangsu Province
(No.BK20201025) and National Natural Science Foundation of China (Nos.12001245,12201207,12371072).\\
{\bf Conflict of Interest:} The authors declared that they have no conflict of interest.

\end{document}